\newcommand{\E}{\mathbb{E}}
\newcommand{\expvalb}[2]{\E_{#1}\left[ #2 \right]}
\newcommand{\expvalc}[3]{\E_{#1}^{#3}\left[ #2 \right]}
\newcommand{\inner}[1]{\left\langle #1 \right\rangle}
\newtheorem{prop}{Proposition}
\newtheorem{lem}{Lemma}
\newtheorem{rem}{Remark}
\newtheorem{definition}{Definition}
\newtheorem{assumption}{Assumption}
\title{Conjugate continuous-discrete projection filter via sparse-grid quadrature}
\author{
  Muhammad F. Emzir\thanks{Corresponding author: \texttt{muhammad.emzir@kfupm.edu.sa}} 
  \and
  Zaid A. Sawlan 
  \and
  Sami El ferik
}
\begin{document}
\maketitle

\begin{abstract}
In this article, we study the continuous-discrete projection filter for exponential-family manifolds with conjugate likelihoods. We first derive the local projection error of the prediction step of the continuous-discrete projection filter. We then derive the exact Bayesian update algorithm for a class of discrete measurement processes with additive Gaussian noise. To control the stiffness of the natural parameters' ordinary differential equations, we introduce a regularization method via projection to the Fisher information metric's eigenspace. Lastly, we apply the proposed method to approximate the filtering density of a modified Van der Pol oscillator problem and a coupled stochastic FitzHugh--Nagumo system. The proposed projection filter shows superior performance compared to several state-of-the-art parametric continuous-discrete filtering methods.
\end{abstract}

\keywords{Estimation \and Stochastic Filter \and Kalman Filtering \and Projection Filter}

\section{Introduction}

Optimal filter applications have become ubiquitous in the era of data abundance. Yet, except for a few known problems where analytical solutions exist, the optimal filter solution needs to be approximated. One approximation of the optimal filtering solution is the projection filter, first introduced in the early 1990s \cite{hanzon1991}. Initially formulated for continuous stochastic dynamics with a continuous measurement process, the projection filter projects the dynamics of conditional densities given by the Kushner--Stratonovich stochastic partial differential equations onto the tangent space of square-root parametric densities. The projection filter underwent a series of significant theoretical developments in the 1990s \cite{brigo1995,brigo1999}, and also recently \cite{armstrong2016,armstrong2016a,armstrong2023},\cite{gao2020a}. The applications of the projection filter were initially restricted to univariate dynamics or to Gaussian density families; see \cite{brigo1999,armstrong2023,kutschireiter2022}.  In a series of developments \cite{emzir2023a, emzir2023,emzir2024,emzir2024a,emzir2024b}, the projection filter has been re-implemented for multivariate applications via sparse-grid integration and an adaptive bijection. In this scheme, various expectations in the projection filter algorithm are computed via sparse-grid quadrature techniques, where the quadrature nodes are constantly shifted in the sample space via the adaptive bijection to focus on the high-density regions of the parametric density. This advancement paves the way for developing continuous-discrete projection filters for multivariate cases, especially using exponential-family manifolds.

Besides the projection filter, there are numerous parametric continuous-discrete filtering algorithms proposed in the literature. For example, the Gaussian approximation has long been used \cite{jazwinski1970,arasaratnam2010,kulikov2019,knudsen2019,wang2022e,kulikov2022}. However, for many nonlinear filtering problems, a single Gaussian density is deemed insufficient. There are also Gaussian-sum filtering methods \cite{alspach1972,terejanu2011,raihan2018}.  In the Gaussian-sum filter of \cite{alspach1972}, the weights of mixands are kept constant between two measurements, which significantly constrains the accuracy of this method. In \cite{terejanu2011}, the weights are updated according to the minimization of the squared $L^2(\mathcal{X})$ discrepancy between the Fokker-Planck equation and the approximate Gaussian-mixture density. This led to differential equations for the weights of mixands, which can be solved using the Galerkin approximation. However, because the weights need to satisfy positivity and summation constraints, the minimization of this cost function turns out to be costly and can lead to many spurious modes. Another variant of the Gaussian-sum filter, the particle-Gaussian-mixture (PGM) filter \cite{raihan2018}, propagates a set of particles using the Markovian kernel corresponding to the state's stochastic differential equation to calculate the means and covariances of the Gaussian mixands. After that, the K-means clustering algorithm is used to assign each particle to a mixand. The means, covariances, and weights for the Gaussian mixands are then approximated from these particles. The use of K-means might lead to many spurious modes and can be computationally expensive if the number of mixands is high. Recent works in this direction include \cite{craft2025,kim2026} which use different clustering algorithms and homotopy approaches.
In addition to Gaussian and Gaussian-mixture approximations, there are also spectral-based methods, such as \cite{challa2000}, and Edgeworth expansion-based filtering methods \cite{challa2000,singer2008,li2018}. For non-Gaussian continuous-discrete filtering, the go-to solver has been the sequential Monte Carlo methods, which are also known as particle filters (see \cite{sarkka2006,xia2013}). In particle filter methods, to avoid particle weight collapse, the number of particles often scales exponentially with the dimension of the sample space \cite{snyder2008,beskos2017}.

The continuous-discrete projection filter is also related to the numerical methods used to approximate the solution of Fokker--Planck equations. Classical treatments of this subject can be found in \cite[Chapter 6]{risken1996}. Paola and Sofi \cite{dipaola2002} applied polynomial expansion to the log of probability density functions for a class of SDEs corresponding to an oscillator using conventional quadrature techniques. The use of physics-informed neural networks to approximate the solution of the Fokker--Planck equation via the score function has also appeared in the literature recently \cite{hu2025}. It has been demonstrated in \cite{hu2025} that the method can produce a good approximation for simple stochastic processes such as the Ornstein--Uhlenbeck process and anisotropic Brownian motions up to one hundred dimensions. However, the use of these methods typically requires extensive computational training of neural networks specific to each SDE. Further, its use in the context of continuous-discrete filtering remains to be seen; see also \cite{chen2021}.

Based on the aforementioned review, we introduce a novel continuous-discrete projection filter algorithm for a class of dynamical systems under discrete observations with Gaussian noise. In particular, the contributions of this article are the following:
\begin{enumerate}
  \item A sparse-grid quadrature based implementation of the continuous-discrete projection filter for exponential-family manifolds with conjugate likelihood.
  \item New theoretical results on projection error reduction through natural statistics augmentation (Lemma \ref{lem:norm_zero_affine} and the subsequent proposition) and an exact Bayesian update equation for the class of continuous-discrete projection filter considered (Proposition \ref{prp:conjugate_condition}), extending beyond the identity covariance case in \cite{brigo1999}.
  \item A simple regularization method via projection to the Fisher information metric's eigenspace to control the stiffness of the ODE corresponding to the natural parameters' prediction step is given in Algorithm \ref{alg:riem_grad}. The theoretical result that describes the relation between the truncation parameter in Algorithm \ref{alg:riem_grad} and the stiffness of natural parameters' ODE is given in Proposition \ref{prp:stiffness_bound_main}.
  \item Numerical experiments that compare the performance and computational efficiency of the proposed method against the Ensemble Kalman Filter (EnKF) \cite{evensen2009}, the Gaussian-Sum Filter (GSF) \cite{alspach1972}, and the Particle-Gaussian-Mixture (PGM) filter \cite{raihan2018}.
\end{enumerate}

We intentionally focus our work on the conjugate exponential families due to a fundamental advantage: in this situation, the continuous-discrete projection local approximation error reduces to the local projection error of the Fokker--Planck equation only. For this reason, we can divide the natural statistics vector $c$ into two parts. The first part, $c_1$, can be selected as monomials up to order $n_o$ such that the corresponding exponential family achieves a good moment-based approximation to the Fokker--Planck equation. We then augment $c_1$ with the second natural statistics $c_2$ such that the augmented statistics $c = [c_1^\top, c_2^\top]^\top$ form an exponential family for which the measurement likelihood is conjugate. This construction ensures that the only source of the approximation error (up to numerical accuracy) arises from the projection of the Fokker--Planck equation only, while the Bayesian update remains exact without requiring numerical optimization. For non-conjugate cases, where exact updates are not possible, our recent work \cite{emzir2025a} can be used to obtain the best natural parameters via Riemannian optimization at an additional computational cost.

After a formal problem statement in Section \ref{sec:Problem_Formulation}, we review the foundations of exponential-family manifolds in Section \ref{sec:Exponential_Family_Manifold}. Section \ref{sec:Local_projection_error} discusses the local projection error of the prediction step of the continuous-discrete projection filter, providing an estimate of the error growth. Section \ref{sec:Augmenting_Natural_Statistics} gives precise mathematical conditions showing that extending natural statistics always reduces the projection error. Section \ref{sec:Exact_Bayesian_Update} explicitly derives the exact Bayesian update algorithm for a class of measurement processes with Gaussian additive noise under the conjugate conditions. Further, Section \ref{sec:numerical_implementation} presents a numerical implementation of the continuous-discrete projection filter via sparse-grid quadrature and an adaptive bijection. Lastly, Section \ref{sec:Numerical_Simulations} presents a numerical simulation of a nonlinear-filtering problem to compare the performance of the proposed projection filter against state-of-the-art continuous-discrete filtering methods.

\section{Problem Formulation}\label{sec:Problem_Formulation}

Consider the continuous-discrete filtering problem on the state-space model:
\begin{gather}
	\label{eqs:problem_statement}
	\begin{aligned}
		dx_t &= f(x_t) \, dt + \varrho(x_t) \, dW_t, \quad t \in [0, T],\\
		y_k &\sim p(y_k\mid x_k), \quad k = 1, 2, \ldots, K,
	\end{aligned}
\end{gather}
where $x_t \in \mathbb{R}^d$ is the state process, $y_k \in \mathbb{R}^{d_y}$ are discrete-time observations at times $t_k = k\Delta t$ for a sampling interval $\Delta t > 0$, $W_t$ is a standard Wiener process in $\mathbb{R}^{d_w}$, and $p(y_k\mid x_k)$ is the likelihood function of the measurement process.
The objective of the continuous-discrete filtering is to compute the conditional probability density function $p(x_t | \mathcal{Y}_k)$ where $\mathcal{Y}_k = \{y_1, \ldots, y_k\}$ denotes the history of observations up to time $t_k$. The conditional density evolves in two phases:
\begin{enumerate}
  \item Prediction step (between measurements): For $t \in (k \Delta t, (k+1) \Delta t)$ the conditional density satisfies the Fokker--Planck equation corresponding to the SDE in \eqref{eqs:problem_statement}.
  \item Bayesian update step: At $t= k \Delta t$, the density is updated according to Bayes' rule using the likelihood $p(y_k \mid x_k)$.
\end{enumerate}
Except for a few cases \cite{daum1984}, the filtering density cannot be computed analytically and must be approximated. The difficulties in solving the continuous-discrete filtering problem include the fact that the Fokker--Planck equation is infinite dimensional, the nonlinear dynamics introduces non-Gaussian conditional densities, and especially for multivariate systems, calculating the expectation cannot be done exactly and its approximation can be expensive to compute.

In what follows, we propose a method to approximate the conditional density by following the projection filter approach \cite{brigo1998}. Specifically, we choose to work on $\mathrm{EM}(c)$, the manifold of exponential family with natural statistics vector $c$. We then project the Fokker--Planck equation corresponding to \eqref{eqs:problem_statement} onto the tangent space of square root of $\mathrm{EM}(c)$. Further, we have carefully selected $c$ such that $\mathrm{EM}(c)$ conjugates with the likelihood $p(y_k\mid x_k)$, enabling exact Bayesian updates without optimization and expectation calculation. All expectations required for the prediction step are computed via sparse-grid quadrature in combination with the adaptive bijection introduced in \cite{emzir2023}. This reduces the computational cost while still maintaining the same level of accuracy.

To develop this approach, we review the mathematical framework in the following sections. We set some notations used throughout the article, then briefly review necessary results in exponential-family manifolds $\mathrm{EM}(c)$, and projection onto the tangent space of the square root of the exponential manifolds $\mathrm{EM}(c)^{\frac{1}{2}}$. We then analyze the local projection error and finally derive the exact Bayesian update algorithm.

\section{Mathematical Preliminaries}\label{sec:Mathematical_Preliminaries}

This section establishes the mathematical foundation for the projection filter. We first introduce notation, then review the theory of exponential-family manifolds and the projection operation onto these manifolds.

\subsection{Notation}\label{sec:Notation}

For an $m$-dimensional manifold $M$ with a chart $(U,\phi)$, we denote $\partial_i \coloneqq \pdv{}{\phi^i}$; i.e., for a $p\in U$, a germ $f \in C_p^\infty(U)$, and $r^i$ as the $i$-th coordinate of $\mathbb{R}^m$, $\left.\pdv{ }{\phi^i}\right|_p f =\left.\pdv{}{r^i}\right|_{\phi(p)} \left(f \circ \phi^{-1} \right)$. We denote the Fisher information matrix $g(\theta)$ with lower indices $g(\theta)_{ij}$, while its inverse is denoted with upper indices $g(\theta)^{ij}$.
Further, we denote the tangent space of a manifold $M$ at a point $p$ as $T_p M$ and the tangent bundle of $M$ as $TM$. For a smooth mapping $F$ between two manifolds $M$ and $N$, we denote $F_\ast$ as the differential of $F$, such that for $X_p \in T_pM$,  $F_\ast X_p \in T_{F(p)} N$ is the \emph{push-forward} of $X_p$. For a parametric density $p_{\theta}$, we denote $\E_{\theta} \left[ \cdot \right] := \E_{p_{\theta}} \left[ \cdot \right]$. % For a set $A$, $\mathbf{1}_{A}$ is the indicator function of the set $A$, i.e., $\mathbf{1}_A(x) = 1$ if $x \in A$ and zero if $x \notin A$.

\subsection{Exponential-Family Manifolds and Projection}\label{sec:Exponential_Family_Manifold}

Having established notation, we now review the exponential-family manifold structure that forms the foundation of our filtering approach. This subsection introduces the exponential family $\text{EM}(c)$ (see \cite{brigo1995,brigo1999}), its key properties, and the projection operation onto the manifold of square-root densities $\text{EM}(c)^{1/2}$.

Let us define a class of probability densities $\mathcal{P}$ with respect to the Lebesgue measure on a fixed domain $\mathcal{X} \subseteq \mathbb{R}^d$ as $\mathcal{P} = \{p \in L^1(\mathcal{X}) : \int_\mathcal{X} p(x) \, dx = 1,\, p(x) \geq 0,\, \forall x \in \mathcal{X}\}$. In particular, let us consider the exponential family
\begin{align}
    \text{EM}(c) \coloneqq \left\{ p \in \mathcal{P} \colon p(x) = \exp\left( c(x)^\top \theta - \psi(\theta) \right) \right\},\label{eq:EM_c}
\end{align}
where $\theta \in \Theta \subset \mathbb{R}^m$ is the natural parameter, and $c\colon\mathbb{R}^d \to \mathbb{R}^m$ is a vector of natural statistics that are assumed to be linearly independent. In what follows, we set $\mathcal{X}=\mathbb{R}^d$. The natural parameter space $\Theta$ is defined as
\begin{equation}
 \Theta \coloneqq \{ \theta \in \mathbb{R}^m\colon \int_\mathcal{X} \exp(c(x)^\top \theta) \, dx < \infty \}. 
\end{equation}
An exponential family is said to be regular if $\Theta$ is an open subset of $\mathbb{R}^m$. The cumulant-generating function is defined by
\begin{equation}
  \psi(\theta) = \log \left[ \int_\mathcal{X} \exp(c(x)^\top \theta) \, dx \right], \, \theta \in \Theta.
\end{equation}
Because the exponential family is assumed to be regular and the natural statistics are linearly independent, the exponential family is minimal \cite{kass1997}. A standard result about regular exponential families (see, e.g., Theorems 2.2.1 and 2.2.5 of \cite{kass1997}) is that the natural parameter set $\Theta$ is convex, and its cumulant-generating function $\psi(\theta)$ is strictly convex on $\Theta$ and is differentiable up to an arbitrary order. The moments of the natural statistics and the corresponding Fisher information matrix satisfy:
\begin{align}
    \E_\theta\left[ c_i \right] &= \pdv{\psi(\theta)}{\theta_i}, &
    g_{ij}(\theta) &= \pdv[2]{\psi(\theta)}{\theta_i}{\theta_j}. \label{eq:eta_g_pdv}
\end{align}
If the representation is minimal, then $g$ is positive definite.

Following \cite{brigo1999}, the projection filter described here is developed on the manifold of square-root exponential family densities $\text{EM}(c)^{\frac{1}{2}} :=\{\sqrt{p_{\theta}}: p_{\theta} \in \text{EM}(c)\}$, where $\text{EM}(c)$ is given by \eqref{eq:EM_c}. As is common in information geometry \cite{amari2000}, we work with a single chart $(\text{EM}(c)^{\frac{1}{2}}, \zeta)$. Here, $\zeta : \text{EM}(c)^{\frac{1}{2}} \to \mathbb{R}^m$ with the identification $\zeta(\sqrt{p_{\theta}}) \coloneqq \theta$, where $\sqrt{p_{\theta}} \coloneqq \sqrt{p(\cdot\, ;\theta)}$. The differential of the map $\zeta$ is denoted by $\zeta_\ast$; i.e., for $X \in T_{\sqrt{p_{\theta}}}\text{EM}(c)^{\frac{1}{2}}$, $\zeta_\ast X \in T_\theta \mathbb{R}^m \cong \mathbb{R}^m$. 
We equip the square-root parametric-densities manifold with the Fisher information metric, which is given by
\begin{equation*}
  \inner{\partial_i,\partial_j}_{\sqrt{p_{\theta}}}
  = \int_\mathcal{X} \frac{\partial \sqrt{p_{\theta}}}{\partial \theta_i} \frac{\partial \sqrt{p_{\theta}}}{\partial \theta_j} \, dx
  = \frac{1}{4} g(\theta)_{ij}.
\end{equation*}
With this metric, the square-root parametric-densities manifold becomes a Riemannian manifold, where notions of inner product, distance, and projection are well-defined. In particular, for any $v \in L^2(\mathcal{X})$, the projection onto $T_{\sqrt{p_{\theta}}}\text{EM}(c)^{\frac{1}{2}}$ is given by \cite{brigo1999}:
\begin{equation}
  \Pi_{(\sqrt{p_{\theta}})} v = \sum_{i=1}^m \sum_{j=1}^m  4 g(\theta)^{ij}\inner{v,\partial_j}_{\sqrt{p_{\theta}}} \partial_i. \label{eq:proj_EM_c_half}
\end{equation}

This projection formula is central to the continuous-discrete projection filter, as it enables us to approximate the evolution of the filtering density by projecting it onto the tangent space of the square-root of the exponential-family manifold. In the next section, we analyze the error introduced by this projection operation.

\section{Projection Filter Theory}\label{sec:Projection_Filter_Theory}

With the mathematical preliminaries in place, we now develop the theoretical foundation for the continuous-discrete projection filter. This section first analyzes the local projection error during the prediction step, then derives conditions for reducing this error through augmentation of natural statistics, and finally presents the exact Bayesian update algorithm for the measurement step.

\subsection{Local Projection Error Analysis}\label{sec:Local_projection_error}

We begin by quantifying the approximation error introduced when projecting the conditional density dynamics onto the exponential-family manifold. This analysis provides insight into how the filter's accuracy evolves over time.

Recall from Section \ref{sec:Problem_Formulation} the state-space model \eqref{eqs:problem_statement}.
 
We use the same set of assumptions as mentioned in \cite[\textsection 3]{brigo2009}: 
\begin{assumption}
  $x_0$ has an almost surely positive density $p_0$ with respect to the Lebesgue measure, \label{asm:x_0}
\end{assumption}
\begin{assumption}\label{asm:f_and_varrho}
  $f \in C^1(\mathcal{X}), \varrho \varrho^\top \in C^2(\mathcal{X})$.
\end{assumption}
\begin{assumption}
  there exists $K>0$, such that, $\forall x, 2 x^\top f(x) + \norm{\varrho(x)\varrho(x)^\top} \leq K (1 + \norm{x}^2)$, \label{asm:x_top_f}
\end{assumption} 
\begin{assumption}
  the law of $x_t$ corresponding to \eqref{eqs:problem_statement} is absolutely continuous, and its density is $C^2(\mathcal{X})$ and satisfies the Fokker--Planck equation \label{asm:law_of_x}
\end{assumption}
Assumptions \ref{asm:x_0}--\ref{asm:x_top_f} ensure the existence of a unique solution to the SDE, while Assumption \ref{asm:law_of_x} guarantees that, for all $t$ between sampling times, the corresponding filtering density $p_t$ in \eqref{eqs:problem_statement} exists and belongs to $C^2(\mathcal{X})$.

The continuous-discrete projection filter is implemented in two steps. In the first part, the dynamics of the square-root density are projected onto the tangent space $T_{\sqrt{p_{\theta}}}\text{EM}(c)^{\frac{1}{2}}$. The second step is the Bayesian update step, where the measurement $y_k$ is incorporated to correct $p_{\theta_{k}^-}$ based on the likelihood density $p(y_k\mid x_k) = \exp(-\ell_y)$. The posterior density is given by
\begin{subequations}
  \begin{align}
    q &= p_{\theta_k^-} \exp(-\ell_y-Z(\theta_k^-)), \label{eq:posterior}\\
    Z(\theta_k^-) &= \log[\E_{\theta_k^-}\left[ \exp(-\ell_y )\right]].
  \end{align}
\end{subequations}

In this section, we will focus on the first step of the continuous-discrete projection filter. Let $t \in ((k-1) \Delta t, k \Delta t)$. The dynamics of the square-root density $\sqrt{p_t}$ related to the state process \eqref{eqs:problem_statement} follows the differential equation
\begin{equation}
    \frac{d \sqrt{p_t}}{d t} = \frac{1}{2 \sqrt{p_t}} \mathcal{L}^*(p_t). \label{eq:sqrt_p_t_dynamics}
\end{equation}
Using \eqref{eq:sqrt_p_t_dynamics}, \eqref{eq:proj_EM_c_half}, and \cite[Lemma 2.1]{brigo1999}, the projection of $ \frac{d \sqrt{p_{\theta_t}}}{d t}$ onto $T_{\sqrt{p_{\theta_t}}}\text{EM}(c)^{\frac{1}{2}}$ is given by
\begin{equation}
  \Pi_{(\sqrt{p_{\theta_t}})} \frac{ d \sqrt{p_{\theta_t}}}{d t}
    = \sum_{i=1}^m\sum_{j=1}^m g(\theta)^{ij} \E_{\theta_t}[\mathcal{L}(c_j)] \partial_i. \label{eq:projected_sqrt_p_dynamics}
\end{equation}
In this equation, $\mathcal{L}$ is the backward Kolmogorov diffusion operator and $\mathcal{L}^*$ is its adjoint. For brevity, we denote the natural statistics expectation as $\eta(\theta) \coloneqq \E_{\theta}[c]$.
The difference between $\sqrt{p_t}$ and $\sqrt{p_{\theta_t}}$ can be formulated as follows. Suppose $t_0 = 0$, $dt > 0$, and initially $p_0 = p_{\theta_0}$. Then
\begin{align*}
    & \sqrt{p_{dt}} - \sqrt{p_{\theta_{dt}}} = \\
    =& \frac{dt}{2 \sqrt{p_{\theta_0}}} \left( \mathcal{L}^* (p_{\theta_0}) - p_{\theta_0}(c - \eta(\theta_0))^\top g(\theta_0)^{-1} \E_{\theta_0}[\mathcal{L}(c)] \right)  \\
    &+ \mathcal{O}(dt^2).
\end{align*}

Taking $dt \to 0$, after some manipulations, we have
\begin{align}
    & \frac{4}{dt^2} \norm{\sqrt{p_{dt}} - \sqrt{p_{\theta_{dt}}}}_{L^2(\mathcal{X})}^2 = \nonumber \\
    =& \E_{\theta_0}\left[ \left( \frac{\mathcal{L}^* (p_{\theta_0})}{p_{\theta_0}} \right)^2 \right] - \E_{\theta_0}[\mathcal{L}(c)]^\top g(\theta_0)^{-1}\E_{\theta_0}[\mathcal{L}(c)].\label{eq:norm_instant_projection_error}
  \end{align}

The difference between $\sqrt{p_t}$ and $\sqrt{p_{\theta_t}}$ can now be written in an integral form, where, by using \eqref{eq:norm_instant_projection_error}, we have the following upper bound
\begin{equation*}
    \norm{\sqrt{p_t} - \sqrt{p_{\theta_t}}}_{L^2(\mathcal{X})} \leq \frac{1}{2} \int_0^t \left( E_1(\sqrt{p_{\theta_\tau}}) + E_2(\sqrt{p_\tau}, \sqrt{p_{\theta_\tau}}) \right) d\tau,
\end{equation*}
with,
\begin{subequations}
  \begin{align}
    E_1(\sqrt{p_{\theta}};c) \coloneqq& \left[ \E_{\theta}\left[ \left( \frac{\mathcal{L}^*(p_{\theta})}{p_{\theta}} \right)^2 \right] \right. \nonumber\\
    &\left. - \E_{\theta}[\mathcal{L}(c)]^\top g(\theta)^{-1}\E_{\theta}[\mathcal{L}(c)] \right]^{1/2}, \label{eq:proj_Error_type_1}\\
    E_2(\sqrt{p}, \sqrt{p_{\theta}}) \coloneqq& \norm{\frac{1}{2 \sqrt{p}} \mathcal{L}^*(p) - \frac{1}{2 \sqrt{p_{\theta}}} \mathcal{L}^*(p_{\theta})}_{L^2(\mathcal{X})}. \label{eq:proj_Error_type_2}
\end{align}
\end{subequations}
The term $E_1$ above can be considered as the discrepancy between the square-root projected density and the square-root density at $t + dt$ given that they are equivalent at time $t$. The accumulation of this error is related to the total projection error when $E_2$ is neglected. While the second error term $E_2$ might not be computable, the first projection error term $E_1(\sqrt{p_{\theta_t}};c)$ can be computed numerically under sufficient conditions that $f \in C^1(\mathcal{X})$, $\varrho, c \in C^2(\mathcal{X})$ with bounded derivatives, and $\E_{\theta} \left[ \left( \frac{\mathcal{L}^*(p_{\theta})}{p_{\theta}} \right)^2 \right] < \infty$, uniformly on $\Theta$.

The error bound above suggests that the projection error depends on the choice of natural statistics $c$. A natural question arises: can we reduce the projection error by carefully selecting or extending the natural statistics? The next subsection addresses this question systematically.

\subsection{Reducing Projection Error via Natural Statistics Augmentation}\label{sec:Augmenting_Natural_Statistics}

We now establish theoretical conditions under which augmenting the natural statistics reduces the local projection error. This result provides guidance for selecting natural statistics in practical implementations.
 In particular, in this section, we will show that expanding natural statistics by augmenting linearly independent elements into existing natural statistics always reduces the local projection error. Mathematically, consider $c^\top = [c_1^\top, c_2^\top]$, where $m = m_1 + m_2$, $c_1: \mathbb{R}^d \to \mathbb{R}^{m_1}$ is the first set of natural statistics, and $c_2: \mathbb{R}^d \to \mathbb{R}^{m_2}$ is another set of natural statistics to be selected.
Choose $\theta^\top = [\theta_1^\top, 0^\top]$. In this case, $p_{\theta} = \exp(c^\top \theta - \psi(\theta)) = \exp(c_1^\top \theta_1 - \psi(\theta_1))$. The question that we would like to answer is whether at $\theta$, the local projection error using the expanded natural statistics $c$, $E_1(\sqrt{p_{\theta}};c)$ is always less than using only $c_1$, $E_1(\sqrt{p_{\theta_1}};c_1)$. If the expanded natural statistics $c$ still have linearly independent elements, then according to \eqref{eq:proj_Error_type_1} this question can be answered if
\begin{equation}
    \E_{\theta}[\mathcal{L}(c)]^\top g(\theta)^{-1}\E_{\theta}[\mathcal{L}(c)] > \E_{\theta}[\mathcal{L}(c_1)]^\top g_{(11)} (\theta)^{-1}\E_{\theta}[\mathcal{L}(c_1)], \label{eq:strict_L_g_inv_L_inequality}
\end{equation}
where,
\begin{equation*}
    g(\theta) = \begin{bmatrix}
        g_{(11)} (\theta) & g_{(12)}(\theta) \\
        g_{(12)}(\theta)^\top & g_{(22)} (\theta)
    \end{bmatrix}, \quad g_{(ij)} (\theta) \coloneqq \E_{\theta} [\tilde{c}_i \tilde{c}_j^\top],
\end{equation*}
and $\tilde{c}_i = c_i - \E_{\theta} [c_i]$.

To proceed further, we need the following assumption and lemma.
\begin{assumption}\label{asm:x_and_x_square_in_c1}
The natural statistics vector $c_1$ has linearly independent elements that include $x_i$, $x_i x_j$ for $i,j = 1, \dots, d$, such that $\Theta_1 \coloneqq \{\theta_1 \in \mathbb{R}^{m_1}: \int \exp(c_1^\top \theta_1) dx < \infty \}$ is open.
\end{assumption}

\begin{lem}\label{lem:norm_zero_affine}
Consider an $m$-dimensional exponential-family manifold $\text{EM}(c) = \{ p_{\theta} = \exp(c^\top \theta - \psi(\theta)) : \theta \in \Theta \}$ with $c^\top = [c_1^\top, c_2^\top]$, where $c_i : \mathbb{R}^d \to \mathbb{R}^{m_i}$ for $i = 1, 2$, $m = m_1 + m_2$, and $c$ is second-order continuously differentiable with respect to $x$, where $c_1$ satisfies Assumption \ref{asm:x_and_x_square_in_c1}.
Then \eqref{eq:norm_zero_requirement} is satisfied for all $\theta \in \Theta$ if and only if $c_2$ is an affine function of $c_1$.
\begin{equation}
  \E_{\theta} [\mathcal{L}(c_2)] = g_{(12)}(\theta)^\top g_{(11)} (\theta)^{-1} \E_{\theta} [\mathcal{L}(c_1)]. \label{eq:norm_zero_requirement}
\end{equation}

\end{lem}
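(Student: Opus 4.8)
The plan is to recast \eqref{eq:norm_zero_requirement} as a single orthogonality statement in $L^2(p_\theta)$ and then treat the two implications separately, the forward one (``if'') being a short computation and the converse (``only if'') carrying essentially all of the difficulty. By integration by parts, and using that $\mathcal{L}$ annihilates constants, one has $\E_\theta[\mathcal{L}(c_k)] = \inner{\Phi_\theta,\tilde c_k}_\theta$, where $\Phi_\theta \coloneqq \mathcal{L}^*(p_\theta)/p_\theta$ and $\inner{\cdot,\cdot}_\theta$ is the $L^2(p_\theta)$ inner product; the boundary terms vanish because the presence of $x_i,x_ix_j$ in $c_1$ (Assumption \ref{asm:x_and_x_square_in_c1}) forces Gaussian-dominated tails on $p_\theta$. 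Writing $\beta(\theta)\coloneqq g_{(12)}(\theta)^\top g_{(11)}(\theta)^{-1}$ and $R_\theta \coloneqq \tilde c_2 - \beta(\theta)\tilde c_1$ for the $L^2(p_\theta)$ regression residual of $c_2$ on $c_1$, one checks that $\beta(\theta)$ is exactly the coefficient matrix of the orthogonal projection of $\tilde c_2$ onto $\mathrm{span}\{\tilde c_{1,1},\dots,\tilde c_{1,m_1}\}$, so $R_\theta$ is orthogonal to $\mathrm{span}\{1,\tilde c_{1,1},\dots,\tilde c_{1,m_1}\}$ by construction. Substituting the inner-product formula, \eqref{eq:norm_zero_requirement} is then equivalent to $\inner{\Phi_\theta,R_\theta}_\theta = 0$ for all $\theta\in\Theta$. (When $c$ is minimal, the block inverse of $g$ shows the left minus right side of \eqref{eq:norm_zero_requirement} equals $S(\theta)\dot\theta_2$, with Schur complement $S = g_{(22)}-g_{(12)}^\top g_{(11)}^{-1}g_{(12)}$ and $\dot\theta_2$ the $c_2$-block of the projected velocity $g^{-1}\E_\theta[\mathcal{L}(c)]$; so the condition says the projected Fokker--Planck flow never excites the $\theta_2$ directions.)

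For the ``if'' direction, suppose $c_2 = Ac_1 + b$ for a constant matrix $A$ and vector $b$. Since $\mathcal{L}$ is linear and $\mathcal{L}(b)=0$, we get $\mathcal{L}(c_2)=A\mathcal{L}(c_1)$ and hence $\E_\theta[\mathcal{L}(c_2)]=A\E_\theta[\mathcal{L}(c_1)]$. Moreover $\tilde c_2 = A\tilde c_1$, so $g_{(12)}^\top = \E_\theta[\tilde c_2\tilde c_1^\top] = A g_{(11)}$ and therefore $\beta(\theta)=A$. Substituting these into the right-hand side of \eqref{eq:norm_zero_requirement} reproduces the left-hand side at every $\theta$; equivalently $R_\theta\equiv 0$.

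For the ``only if'' direction, which is the crux, I would assume $\inner{\Phi_\theta,R_\theta}_\theta = 0$ for all $\theta$ and show $R_\theta\equiv 0$, i.e.\ each component of $c_2$ lies in $\mathrm{span}\{1,c_{1,1},\dots,c_{1,m_1}\}$. The essential point is that a single scalar identity per $\theta$ must be leveraged into a pointwise conclusion, and the only handle is the rich $\theta$-dependence of $\Phi_\theta$. I would use two facts. First, $\Phi_\theta$ is an explicit quadratic polynomial in $\theta$: writing $u=c^\top\theta$, one has $\Phi_\theta = A_0 + L^\top\theta + \tfrac12\theta^\top Q\theta$ with $Q(x)=(\nabla c)(x)\,\varrho\varrho^\top(x)\,(\nabla c)(x)^\top$, so activating the $\theta_2$-block makes $\Phi_\theta$ acquire contributions built from $\nabla c_2$ that are generically not expressible through $c_1$. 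Second, the exponential-family identity $\partial_{\theta_k}\E_\theta[\phi]=\mathrm{Cov}_\theta(\phi,c_k)$ turns the one identity $\inner{\Phi_\theta,R_\theta}_\theta\equiv 0$ into a whole hierarchy of moment equations. I would differentiate the identity twice along the $\theta_2$-directions and isolate the part coming from the purely quadratic term of $\Phi_\theta$, whose second $\theta_2$-derivative is the $\theta$-independent weight $(\nabla c_2)\varrho\varrho^\top(\nabla c_2)^\top$; combining the resulting relation with the built-in orthogonality $R_\theta\perp\mathrm{span}\{1,\tilde c_1\}$ should force $R_\theta$ to vanish on the support. Assumption \ref{asm:x_and_x_square_in_c1} is used throughout to guarantee that $\Theta$ is open (so these $\theta$-derivatives are legitimate), that all polynomial moments and integrations by parts are finite and boundary-term free, and that the family is large enough to separate the residual directions.

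The main obstacle is exactly this converse: converting the scalar, $\theta$-dependent orthogonality into the pointwise statement $R_\theta\equiv 0$. The difficulty is that the measure $p_\theta$, the residual $R_\theta$, and the weight $\Phi_\theta$ all vary with $\theta$ simultaneously, and one must rule out that the diffusion weight $\varrho\varrho^\top$ conspires---across all $\theta$---to keep a nonzero residual orthogonal to $\Phi_\theta$ (the pure-Gaussian/pure-diffusion regime already shows the condition can hold spuriously at isolated $\theta$, so the full range of $\theta$ is indispensable). Establishing that activating $\theta_2$ necessarily excites any genuinely new, $c_1$-inexpressible component of $c_2$, and that the openness and Gaussian richness from Assumption \ref{asm:x_and_x_square_in_c1} supply enough independent $\theta$-variations to detect it, is where the real work lies.
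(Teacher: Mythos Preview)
Your ``if'' direction is correct and matches the paper's argument. The ``only if'' direction, however, is left incomplete: you reach the right starting identity $\E_\theta[\mathcal{L}(c_2 - \beta(\theta)c_1)] = 0$ for all $\theta$ (equivalent to your $\inner{\Phi_\theta,R_\theta}_\theta=0$), but your plan to differentiate in $\theta_2$ and exploit the quadratic structure of $\Phi_\theta$ is only sketched, and you yourself flag that this is ``where the real work lies'' without carrying it through. The simultaneous $\theta$-dependence of $p_\theta$, $\beta(\theta)$, and $\Phi_\theta$ makes that route delicate, and it is not clear that second $\theta_2$-derivatives alone would close the argument.

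The paper takes a much shorter path that you are circling around but never land on: \emph{Gaussian localization}. Because Assumption~\ref{asm:x_and_x_square_in_c1} puts all $x_i$ and $x_ix_j$ into $c_1$, every Gaussian density lies in $\text{EM}(c)$ (at $\theta_2=0$). Hence for any point $x_s$ one can choose $\theta$ so that $p_\theta=\mathcal N(x_s,\Sigma)$ with $\Sigma$ arbitrarily small, which drives $\E_\theta[\phi]$ arbitrarily close to $\phi(x_s)$ for continuous $\phi$. Applied to the continuous function $x\mapsto \mathcal{L}\bigl(c_2(x)-\beta(\theta)c_1(x)\bigr)$, this instantly upgrades the expectation identity to the pointwise identity $\mathcal{L}(c_2 - \beta(\theta)c_1)=0$ on $\mathbb R^d$, i.e.\ $c_2 - \beta(\theta)c_1\in\ker\mathcal L$. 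From there the paper multiplies by $\tilde c_1^\top$, takes $\E_\theta$, and uses the defining relation $\beta(\theta)g_{(11)}=g_{(12)}^\top$ to show the kernel element is uncorrelated with $c_1$ under every $p_\theta$, hence constant; this yields $c_2=\beta(\theta)c_1+b$ with $\beta$ forced to be $\theta$-independent. The key idea you are missing is that Assumption~\ref{asm:x_and_x_square_in_c1} is not there merely for integrability and openness of $\Theta$---it guarantees the family contains arbitrarily sharp Gaussians, which converts your moment identity into a pointwise one in a single step and bypasses the differentiation hierarchy entirely.
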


\begin{proof}
If $c_2$ is an affine function of $c_1$, then there exist $N \in \mathbb{R}^{m_2 \times m_1}$ and $b \in \mathbb{R}^{m_2}$ such that $c_2 = N c_1 + b$. Evaluating $\E[\mathcal{L}(c_2)]$ directly shows that if $c_2$ is affine, then \eqref{eq:norm_zero_requirement} is satisfied.

Let \eqref{eq:norm_zero_requirement} be satisfied, then, with $M(\theta) \coloneqq g_{(12)}(\theta)^\top g_{(11)}(\theta)^{-1}$, we have
% \begin{equation*}
$
    \E_{\theta} [\mathcal{L}(c_2 - M(\theta) c_1)] = 0.
$
% \end{equation*}
Due to Assumption \ref{asm:x_and_x_square_in_c1}, $\mathcal{L}(c_2 - M(\theta) c_1)$ will be continuous. If $\mathcal{L}(c_2 - M(\theta) c_1)$ is non-zero in some subset of $\mathcal{S} \subset \mathbb{R}^d$, then we can take $\mathcal{B}(x_s, \delta) \in \mathcal{S}$, an open ball with radius $\delta$ centered on $x_s$. Then we can choose $p_{\theta}$ to be a Gaussian density with mean at $x_s$ and a small enough covariance matrix such that $\E_{\theta} [\mathcal{L}(c_2 - M(\theta) c_1)]$ can be arbitrarily made close to $\mathcal{L}(c_2(x_s) - M(\theta)c_1(x_s)) \neq 0$, which leads to a contradiction. Therefore, $\mathcal{L}(c_2 - M(\theta) c_1) = 0$ on $\mathbb{R}^d$. This implies that $c_2 - M(\theta) c_1$ belongs to the kernel of $\mathcal{L}$. Let us pick $c_2 - M(\theta)c_1 = \epsilon$, where $\epsilon \in \text{Ker}(\mathcal{L})$. Multiplying both sides by $\tilde{c}_1^\top$ and taking expectations w.r.t. $p_{\theta}$
\begin{gather*}
  \begin{aligned}
    \E_{\theta}[c_2 \tilde{c}_1^\top] - \E_{\theta}[M(\theta) c_1 \tilde{c}_1^\top] &= \E_{\theta}[\epsilon \tilde{c}_1^\top] \\
    g_{(12)}^\top(\theta) - M(\theta)g_{(11)}(\theta) &= \E_{\theta}[(\epsilon - \E_{\theta}[\epsilon]) \tilde{c}_1^\top]
\end{aligned}
\end{gather*}
which means
% \begin{equation*}
$0 = \E_{\theta} [\tilde{c}_1 (\epsilon - \E_{\theta} [\epsilon])^\top] g_{(11)}(\theta)^{-1}$.
% \end{equation*}
Hence $\E_{\theta} [\tilde{c}_1 (\epsilon - \E_{\theta} [\epsilon])^\top] = 0$, which means that $\epsilon$ is a constant function. Hence $c_2 = M(\theta)c_1 + b$ for some constant $b \in \mathbb{R}^{m_2}$. Since $M(\theta) = N$ for any $N \in \mathbb{R}^{m_2 \times m_1}$ such that $c_2 = N c_1 + b$, we have shown that $c_2$ is an affine function of $c_1$.
\end{proof}

Consider the validity of the inequality \eqref{eq:strict_L_g_inv_L_inequality} for any $\theta \in \Theta$. In the following proposition, we show that when $g(\theta)$ is invertible, then the inequality \eqref{eq:strict_L_g_inv_L_inequality} is always satisfied.
\begin{prop}
  If the family $\text{EM}(c)$ is minimal, then \eqref{eq:strict_L_g_inv_L_inequality} holds.
\end{prop}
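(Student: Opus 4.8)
The plan is to reduce the claimed inequality \eqref{eq:strict_L_g_inv_L_inequality} to a single Schur-complement computation. Write $a \coloneqq \E_{\theta}[\mathcal{L}(c_1)] \in \mathbb{R}^{m_1}$ and $b \coloneqq \E_{\theta}[\mathcal{L}(c_2)] \in \mathbb{R}^{m_2}$, so that $\E_{\theta}[\mathcal{L}(c)] = [a^\top, b^\top]^\top$, and recall the block structure of $g(\theta)$ together with $M(\theta) \coloneqq g_{(12)}(\theta)^\top g_{(11)}(\theta)^{-1}$ from Lemma \ref{lem:norm_zero_affine}. Since $\text{EM}(c)$ is minimal, $g(\theta)$ is positive definite, hence invertible, so the block $g_{(11)}(\theta)$ and the Schur complement $S(\theta) \coloneqq g_{(22)}(\theta) - g_{(12)}(\theta)^\top g_{(11)}(\theta)^{-1} g_{(12)}(\theta)$ are both positive definite.

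First I would apply the block-inverse (completion-of-squares) identity for the quadratic form of a partitioned positive-definite matrix, which gives
\begin{align*}
  \E_{\theta}[\mathcal{L}(c)]^\top g(\theta)^{-1} \E_{\theta}[\mathcal{L}(c)]
  &= a^\top g_{(11)}(\theta)^{-1} a \\
  &\quad + (b - M(\theta) a)^\top S(\theta)^{-1} (b - M(\theta) a).
\end{align*}
Subtracting the right-hand side of \eqref{eq:strict_L_g_inv_L_inequality}, namely $a^\top g_{(11)}(\theta)^{-1} a$, the difference between the two sides collapses to the single residual term $(b - M(\theta) a)^\top S(\theta)^{-1} (b - M(\theta) a)$. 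Because $S(\theta)^{-1}$ is positive definite, this residual is nonnegative, which already yields the non-strict form of the inequality at every $\theta \in \Theta$.

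The main obstacle is establishing strictness, i.e., ruling out the residual being zero. The residual vanishes exactly when $b = M(\theta) a$, which is precisely condition \eqref{eq:norm_zero_requirement} of Lemma \ref{lem:norm_zero_affine}. By that lemma, \eqref{eq:norm_zero_requirement} holds throughout $\Theta$ if and only if $c_2$ is an affine function of $c_1$; but an affine relation $c_2 = N c_1 + b_0$ would force $c_2 - N c_1$ to be constant, contradicting the linear independence of the components of $c$ guaranteed by minimality. Hence minimality excludes the affine case, so the residual cannot vanish identically, and the inequality is strict. I would be careful here to pin down the logic linking the \emph{algebraic} nonnegativity of the Schur residual to the \emph{functional} characterization of its vanishing supplied by Lemma \ref{lem:norm_zero_affine}: the Schur identity delivers $\ge$ pointwise, while the strict $>$ is obtained wherever \eqref{eq:norm_zero_requirement} fails, which the lemma certifies cannot be an identity on $\Theta$ once $c_2$ is not affine in $c_1$. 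This coupling between the two lemmas is the delicate point of the argument and the place where minimality is genuinely used.
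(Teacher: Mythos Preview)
Your proof is correct and follows essentially the same route as the paper: the paper also expands $\E_{\theta}[\mathcal{L}(c)]^\top g(\theta)^{-1}\E_{\theta}[\mathcal{L}(c)]$ via the Schur complement, writes the excess over $L_1^\top g_{(11)}^{-1}L_1$ as the squared norm $\|\sqrt{D(\theta)}(L_2 - M(\theta)L_1)\|_2^2$ with $D(\theta)=S(\theta)^{-1}$, and then invokes Lemma~\ref{lem:norm_zero_affine} together with minimality to exclude the equality case. Your observation that the lemma only rules out \eqref{eq:norm_zero_requirement} holding \emph{identically} on $\Theta$, rather than pointwise, is a genuine subtlety that the paper's argument handles in exactly the same way you do.
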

\begin{proof}
  Using $L_i \coloneqq \E_{\theta} [\mathcal{L}(c_i)]$, via the Schur complement, we obtain
\begin{align}
    &\E_{\theta}[\mathcal{L}(c)]^\top g(\theta)^{-1}\E_{\theta}[\mathcal{L}(c)] \nonumber\\
    =& L_1^\top g_{(11)} (\theta)^{-1}L_1 \nonumber\\
    &+ \norm{\begin{bmatrix} -\sqrt{D(\theta)} g_{(12)}(\theta)^\top g_{(11)} (\theta)^{-1}  & \sqrt{D(\theta)} \end{bmatrix} \begin{bmatrix} L_1 \\ L_2 \end{bmatrix}}_2^2 \nonumber \\
    \geq& L_1^\top g_{(11)} (\theta)^{-1}L_1.\label{eq:L_g_inv_L_inequality}
\end{align}
where $D(\theta) \coloneqq \left( g(\theta) / g_{(11)}(\theta) \right)^{-1}$ is the inverse of the Schur complement of $g_{(11)}(\theta)$ from $g(\theta)$, and $\sqrt{D(\theta)}^\top \sqrt{D(\theta)} = D(\theta)$. The equality case of \eqref{eq:L_g_inv_L_inequality} occurs if
\begin{equation*}
    \sqrt{D(\theta)} g_{(12)}(\theta)^\top g_{(11)} (\theta)^{-1} \E_{\theta} [\mathcal{L}(c_1)] - \sqrt{D(\theta)} \E_{\theta} [\mathcal{L}(c_2)] = 0.
\end{equation*}
Since $D(\theta)$ is invertible, the equality in \eqref{eq:L_g_inv_L_inequality} can only happen if \eqref{eq:norm_zero_requirement} is satisfied.
Using Lemma \ref{lem:norm_zero_affine}, $c_2$ will only satisfy the equality condition \eqref{eq:norm_zero_requirement} if it is an affine function of $c_1$. However, since $g(\theta)$ is invertible and $c_2$ is linearly independent of $c_1$ and constants, the inequality \eqref{eq:strict_L_g_inv_L_inequality} is satisfied.
\end{proof}

Motivated by the above explanation, let us define precisely the necessary conditions for a set of statistics $c_2$ to be considered an extension of $c_1$.
\begin{definition}\label{def:extension_c}
  A function $c_2: \mathcal{X} \to \mathbb{R}^{m_2}$ is an extension of $c_1$ with respect to $\text{EM}(c_1)$ if $c_2$ is affinely independent of $c_1$ and there exists a non-empty open neighborhood of zero $\Theta_2 \subset \mathbb{R}^{m_2}$ such that $\Theta \subset \Theta_1 \otimes \Theta_2 \subset \mathbb{R}^{m_1 + m_2}$ is defined by
  \begin{equation}
      \Theta = \left\{ \theta_1 \in \Theta_1, \theta_2 \in \mathbb{R}^{m_2} : \int_{\mathcal{X}} \exp(c_1^\top \theta_1 + c_2^\top \theta_2) \, dx < \infty \right\} \label{eq:Theta_extension}
  \end{equation}
\end{definition}
The definition above ensures that the extended natural statistics $c^\top = [c_1^\top, c_2^\top]$ forms a minimal exponential-family manifold $\text{EM}(c)$ such that $\text{EM}(c_1) \subset \text{EM}(c)$. Notice that, for the case of a univariate problem, if $c_1 = [x, x^2]$, then $c_2 = [x^3]$ is \textbf{not} an extension of $c_1$ since, although it is linearly independent of all entries of $c_1$, there is no open set on $\mathbb{R}$ such that $\Theta$ in \eqref{eq:Theta_extension} exists. However, if we choose $c_2 = [x^3, x^4]$, then $c_2$ is an extension of $c_1$.

From the projection filter's perspective, in addition to reducing the projection error $E_1$, the reason for extending the natural statistics is that we can choose $c_1$ to be the set of monomials with maximum order $n_o$ such that $\text{EM}(c_1)$ can be considered to be a good approximation to the solution of the Fokker--Planck equation up to moments of order $n_o$. Then the extension $c_2$ is chosen to ensure that the augmented natural statistics $c = [c_1^\top, c_2^\top]^\top$ form an exponential-family manifold $\text{EM}(c)$ such that the likelihood function from the discrete observation process $y_k \sim p(y_k | x_k)$ is conjugate to $\text{EM}(c)$. This way, we ensure that the only sources of approximation errors for the whole continuous-discrete filtering steps (up to numerical accuracy) arise from the projection error of the Fokker--Planck equation.

Having established how to select natural statistics to minimize projection error during the prediction step, we now address the measurement update step. By exploiting conjugacy properties, we can perform exact Bayesian updates without requiring numerical optimization.

\subsection{Exact Bayesian Update for Conjugate Likelihoods}\label{sec:Exact_Bayesian_Update}

The key advantage of the exponential-family framework is that when the likelihood is conjugate to the prior, the posterior remains in the same exponential family. This subsection derives the explicit update formula for measurements with additive Gaussian noise.
 In the case that the likelihood function $\exp(-\ell_y)$ is conjugate to the prior, then the posterior $q$ given in \eqref{eq:posterior} also belongs to $\text{EM}(c)$, and the Bayesian update can be performed exactly. The following lemma gives the precise condition for the conjugacy.
  \begin{lem}{\cite{emzir2025a}}\label{lem:conjugate}
  Let $\theta_0\in \Theta$ and $q = p_{\theta_0}\exp(-\ell_y - Z(\theta_0))$, where the support of $\exp(-\ell_y)$ is also $\mathcal{X}$, and $p_{\theta_0} \in \text{EM}(c)$. If there exists $\theta_\ell \in \mathbb{R}^m$ where the negative log likelihood can be written as $-\ell_y = c^\top \theta_\ell$, and $\exp(Z(\theta_0))<\infty$, then $q = p_{\theta_\ast}$ where $\theta_\ast = \theta_0 - \theta_\ell \in \Theta$.
  \end{lem}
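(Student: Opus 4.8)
The plan is to prove the claim by direct substitution, since once the log-likelihood is written in the natural-statistics form the posterior collapses algebraically onto a single exponential-family member. First I would expand the posterior \eqref{eq:posterior} explicitly: writing $p_{\theta_0}(x) = \exp(c(x)^\top \theta_0 - \psi(\theta_0))$ and using the hypothesis that the log-likelihood is linear in the natural statistics, the product $q(x) = \exp\!\big(c(x)^\top \theta_0 - \psi(\theta_0)\big)\exp(-\ell_y(x))\exp(-Z(\theta_0))$ becomes $q(x) = \exp\!\big(c(x)^\top \theta_\ast - \psi(\theta_0) - Z(\theta_0)\big)$ with $\theta_\ast = \theta_0 - \theta_\ell$, because both exponents are linear in $c(x)$ and hence add. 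This reduces the lemma to two tasks: identifying the lumped constant $\psi(\theta_0) + Z(\theta_0)$ with the cumulant-generating function $\psi(\theta_\ast)$, and verifying that $\theta_\ast$ actually lies in $\Theta$.

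For the normalization, I would compute $Z(\theta_0)$ directly from its definition. Since $Z(\theta_0) = \log \E_{\theta_0}[\exp(-\ell_y)] = \log \int_{\mathcal{X}} \exp(c^\top \theta_0 - \psi(\theta_0)) \exp(-\ell_y)\, dx$, substituting the linear form of $\ell_y$ gives $Z(\theta_0) = -\psi(\theta_0) + \log \int_{\mathcal{X}} \exp(c^\top \theta_\ast)\, dx$. The remaining integral is exactly $\exp(\psi(\theta_\ast))$ by the definition of $\psi$, so that $\psi(\theta_0) + Z(\theta_0) = \psi(\theta_\ast)$. Substituting back yields $q(x) = \exp(c(x)^\top \theta_\ast - \psi(\theta_\ast)) = p_{\theta_\ast}(x)$, which is the asserted identity.

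The step I expect to be the real content — rather than the bookkeeping — is establishing $\theta_\ast \in \Theta$, i.e. that the collapsed exponential is a genuine normalizable density and not merely a formal expression. Here the hypothesis $\exp(Z(\theta_0)) < \infty$ is precisely what is needed: the computation above shows $Z(\theta_0) = -\psi(\theta_0) + \log \int_{\mathcal{X}} \exp(c^\top \theta_\ast)\, dx$, and since $\theta_0 \in \Theta$ makes $\psi(\theta_0)$ finite, the finiteness of $Z(\theta_0)$ is equivalent to $\int_{\mathcal{X}} \exp(c^\top \theta_\ast)\, dx < \infty$, which is the defining condition for membership in $\Theta$. Thus the same hypothesis simultaneously guarantees $\theta_\ast \in \Theta$ and the finiteness of $\psi(\theta_\ast)$ invoked in the normalization step. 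I would also emphasize that the assumption that $\exp(-\ell_y)$ has full support $\mathcal{X}$ is what allows the combined exponent to be read as a valid $\text{EM}(c)$ parameterization over the same domain, ruling out any shrinkage of support that would otherwise break the identification $q = p_{\theta_\ast}$.
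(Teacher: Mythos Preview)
The paper does not actually prove this lemma; it is quoted from \cite{emzir2025a} and stated without proof, so there is nothing in the paper to compare your argument against. Your direct-substitution argument is the natural (and correct) route: combine the two exponents linear in $c$, then identify the normalizing constant with $\psi(\theta_\ast)$ via the definition of $Z(\theta_0)$, and finally read off $\theta_\ast\in\Theta$ from the finiteness hypothesis. That is exactly how one would expect the cited proof to go.

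One small caveat worth cleaning up: with the hypothesis written as $-\ell_y = c^\top\theta_\ell$, adding the exponents gives $c^\top\theta_0 + c^\top\theta_\ell = c^\top(\theta_0+\theta_\ell)$, not $c^\top(\theta_0-\theta_\ell)$. Your sentence ``both exponents are linear in $c(x)$ and hence add'' is correct, but the conclusion $\theta_\ast=\theta_0-\theta_\ell$ does not follow from it. This sign is in fact inconsistent in the paper itself (the update rule \eqref{eq:ExactBayesianUpdate} and the derivation in Proposition~\ref{prp:conjugate_condition} effectively use $\theta_\ast=\theta_0+\theta_\ell$), so you have inherited a typo from the lemma statement rather than introduced an error of your own; just be aware that your algebra, carried through honestly, yields the plus sign.
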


  In what follows, we describe an explicit Bayesian update algorithm for a class of measurement processes with additive Gaussian noise by giving the expression of $\theta_\ell$ as a function of the measurement $y_k$. The case where $R = I$ has been derived in \cite[\textsection 6.2]{brigo1999}.
  \begin{prop}\label{prp:conjugate_condition}
    Let the discrete observation process be given by $y_k = h(x_k) + v_k$, where $v_k \sim \mathcal{N}(0, R)$. If there exists a subset $c_h: \mathcal{X} \to \mathbb{R}^{m_h}$ of natural statistics $c$ such that $h = H^\top c_h$, and its Kronecker product $c_h \otimes c_h$ is also a subset of $c$, then
    \begin{equation}
        \theta_\ell = -K_1 y_k + \theta_{(\ell, 0)}, \label{eq:theta_ell}
    \end{equation}
    where $K_1 = T_1^\top H R^{-1}$ and $\theta_{(\ell, 0)} = \frac{1}{2} T_2^\top \text{vec}(H R^{-1} H^\top)$, and the matrices $T_1, T_2$ are defined such that:
    \begin{align}
      c_h &= T_1 c, & c_h \otimes c_h &= T_2 c. \label{eq:T_1_T_2}
  \end{align}
\end{prop}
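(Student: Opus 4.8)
The plan is to expand the Gaussian negative log-likelihood explicitly and match it, term by term, against the requirement of Lemma~\ref{lem:conjugate} that the negative log-likelihood be a linear function of $c$ up to an $x_k$-independent constant, namely $\ell_y = c^\top \theta_\ell$ in the convention for which $\theta_\ast = \theta_k^- - \theta_\ell$. Starting from $y_k = h(x_k) + v_k$ with $v_k \sim \mathcal{N}(0,R)$, I would write $\ell_y = \tfrac{1}{2}(y_k - h(x_k))^\top R^{-1}(y_k - h(x_k))$ up to an additive constant independent of $x_k$ (the Gaussian normalizer together with the purely-$y_k$ term $\tfrac{1}{2} y_k^\top R^{-1} y_k$), since any such constant is absorbed into the normalizer $Z(\theta_k^-)$ appearing in \eqref{eq:posterior}. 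Expanding the square leaves two $x_k$-dependent contributions: a term linear in $h$, namely $-y_k^\top R^{-1} h(x_k)$, and a term quadratic in $h$, namely $\tfrac{1}{2} h(x_k)^\top R^{-1} h(x_k)$.

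Next I would substitute the hypothesis $h = H^\top c_h$. For the linear term, the symmetry of $R^{-1}$ gives $-y_k^\top R^{-1} H^\top c_h = -c_h^\top H R^{-1} y_k$, and inserting $c_h = T_1 c$ turns this into $-c^\top T_1^\top H R^{-1} y_k = -c^\top K_1 y_k$, which identifies $K_1 = T_1^\top H R^{-1}$. The quadratic term is the crux of the argument. Writing it as $\tfrac{1}{2} c_h^\top (H R^{-1} H^\top) c_h$ and applying the vectorization identity $a^\top A a = \text{vec}(A)^\top (a \otimes a)$ with $a = c_h$ and the symmetric matrix $A = H R^{-1} H^\top$, I convert the quadratic form into an expression linear in $c_h \otimes c_h$. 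The hypothesis that $c_h \otimes c_h$ is itself a subset of $c$, encoded by $c_h \otimes c_h = T_2 c$, then yields $\tfrac{1}{2} h^\top R^{-1} h = \tfrac{1}{2} \text{vec}(H R^{-1} H^\top)^\top T_2 c = c^\top \theta_{(\ell,0)}$ with $\theta_{(\ell,0)} = \tfrac{1}{2} T_2^\top \text{vec}(H R^{-1} H^\top)$. Collecting the two pieces gives $\ell_y = c^\top(-K_1 y_k + \theta_{(\ell,0)})$ modulo the absorbed constant, which is exactly \eqref{eq:theta_ell}, whereupon Lemma~\ref{lem:conjugate} delivers the exact posterior.

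The main obstacle I anticipate is the bookkeeping around the quadratic term rather than any deep difficulty. I must apply the Kronecker/vec identity under a single consistent convention, use the symmetry of $A = H R^{-1} H^\top$ so that the identity is convention-independent, and confirm that the selection matrix $T_2$ correctly extracts $c_h \otimes c_h$ from $c$ even though the Kronecker product contains repeated off-diagonal entries. A secondary point to verify is the exact sign-and-constant convention linking $\ell_y$ to $\theta_\ell$ in Lemma~\ref{lem:conjugate}, and that the constructed $\theta_\ell \in \mathbb{R}^m$ keeps $\theta_\ast = \theta_k^- - \theta_\ell$ inside $\Theta$ so that the update remains well-defined.
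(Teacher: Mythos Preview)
Your proposal is correct and follows essentially the same approach as the paper: expand the Gaussian negative log-likelihood, discard $x_k$-independent constants into the normalizer, and use the vectorization identity $a^\top A a = \mathrm{vec}(A)^\top (a\otimes a)$ (the paper routes through the trace, $c_h^\top A c_h = \mathrm{tr}(A\,c_h c_h^\top)$, which amounts to the same thing) before applying the selection matrices $T_1,T_2$. Your anticipated caveats about the sign/convention linking $\ell_y$ to $\theta_\ell$ and about the repeated entries in $c_h\otimes c_h$ are exactly the loose ends the paper glosses over; otherwise the arguments coincide.
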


\begin{proof}
    Under the condition in the hypothesis, we have
    \begin{equation*}
        \ell = (y_k^\top R^{-1} H^\top) c_h - \frac{1}{2} (c_h^\top H R^{-1} H^\top c_h) - \frac{1}{2} y_k^\top R^{-1} y_k + K_\ell.
    \end{equation*}
    The terms $\frac{1}{2} y_k^\top R^{-1} y_k$ and $K_\ell$ will be canceled by normalization. Also,
    \begin{align*}
      -\frac{1}{2} c_h^\top H R^{-1} H^\top c_h &= -\frac{1}{2} \text{tr}((H R^{-1} H^\top)(c_h c_h^\top)) \\
        &= -\frac{1}{2} \text{vec}(H R^{-1} H^\top)^\top (c_h \otimes c_h).
    \end{align*}
    Hence, $\theta_\ell$ can be found by a linear combination and reindexing of $(y_k^\top R^{-1} H)^\top$ and $-\frac{1}{2} \text{vec}(H R^{-1} H^\top)$. The condition that $c_h$ and $c_h \otimes c_h$ are in the span of $c$ can be written as \eqref{eq:T_1_T_2}. Using this, we finally have $\theta_\ell$ as required:
    \begin{align*}
        \theta_\ell &= -T_1^\top H R^{-1} y_k + \frac{1}{2} T_2^\top \text{vec}(H R^{-1} H^\top).
    \end{align*}
\end{proof}

\begin{rem}
In light of the proposition above, the measurement vector $y_k$ enters the Bayesian update step linearly. Furthermore, the shift $\theta_{(\ell, 0)}$ can be precomputed before the simulation since it is a constant. Lastly, by Lemma \ref{lem:conjugate}, we have the Bayesian update given by the following simple equation:
\begin{equation}
  \theta_k = \theta_k^- + \theta_{(\ell, 0)} - K_1 y_k. \label{eq:ExactBayesianUpdate}
\end{equation}
\end{rem}

\begin{rem}
  To use Proposition \ref{prp:conjugate_condition}, if $y_k=h(x_k) + v_k$ and $c_1$ has been selected (e.g., monomials of $x$ up to order $n_0$), then one can select $c_h$ as the remaining basis such that elements of $h$ can be expanded in terms of $c_1$ and $c_h$. Then one finally needs to check if $c_2$ can be selected to be an augmentation of $c_h$ and $c_h\otimes c_h$, and whether $c_2$ extends $c_1$ in the sense of Definition \ref{def:extension_c}. If so, then Proposition \ref{prp:conjugate_condition} holds. For example in Section \ref{sec:Numerical_Simulations} for Van der Pol simulation, where $h(x) = [\sin(x_1), \sin(x_2)]^\top$ and $c_1=\{x^{\boldsymbol{i}} : |\boldsymbol{i}| \leq 4\}$ we can choose $c_h = h$.
\end{rem}

This completes the theoretical development of the projection filter. The prediction step projects the Fokker--Planck dynamics onto the exponential-family manifold, while the update step performs exact Bayesian inference via \eqref{eq:ExactBayesianUpdate}. We now turn to the practical implementation of these operations using sparse-grid quadrature.

\section{Numerical Implementation}\label{sec:numerical_implementation}

The projection filter algorithm developed in the previous section requires computing expectations of the form $\E_\theta[\varphi]$ for various functions $\varphi$. This section describes how to evaluate these expectations efficiently using sparse-grid quadrature combined with an adaptive bijection that focuses computational effort on high-probability regions.

To calculate the cumulant-generating function $\psi(\theta)$ and expectations with respect to $p_\theta$ in \eqref{eq:projected_sqrt_p_dynamics}, we use the sparse-grid quadrature combined with the adaptive Gaussian-based bijection from \cite{emzir2023} to adaptively cover the high-density region of $p_\theta$. The $d$-dimensional sparse-grid quadrature is constructed from a unidimensional grid via what is known as Smolyak's construction \cite{smolyak1963}. The unidimensional grids like the Gauss--Patterson and Gauss--Kronrod grids have $N(1,l) = 2^{l+1}-1$ nodes, where $l$ is the sparse-grid level, and $N(d,l)$ is the total number of quadrature nodes. An important feature of Smolyak's construction is that for a $d$ dimensional sparse-grid, the sparse-grid nodes have only $\mathcal{O}(2^l l^{d+1})$ quadrature nodes instead of $N(1,l)^d$ nodes as is the case with the tensor product of unidimensional grids \cite{novak1996,gerstner1998}. Hence, the sparse-grid quadratures scale modestly with the dimension. On the other hand, for many particle filter methods, the number of particles often scales exponentially with the dimension of the problem to avoid particle weight collapse \cite{snyder2008,beskos2017}. In terms of approximation accuracy, for any function belonging to the Sobolev space $H^r([0,1]^d)$, the sparse-grid's approximation error is bounded by $\mathcal{O}(2^{-l r}l^{(d-1)(r+1)})$, where $r$ is a constant related to smoothness \cite{gerstner1998}. Due to the smoothness of the projection density, a small number of nodes can be used to achieve an accurate approximation.

A brief description of adaptive bijection combined with sparse-grid integration is as follows. The adaptive bijection, denoted as $\phi_\xi:\mathbb{R}^d \to \mathbb{R}^d$, where the parameter of the bijection is given by $\xi=\left(\mu, \Sigma\right)$, is defined as:
\begin{equation}
  \phi_\xi(\tilde{y}) = \mu + \sqrt{2} L \tilde{y}, \quad \tilde{y} \in \mathbb{R}^d. \label{eq:phi_xi_gaussian_modified}
\end{equation}
In \eqref{eq:phi_xi_gaussian_modified}, $\mu = \expvalb{\theta}{x}$, $\Sigma = \expvalb{\theta}{(x - \mu)(x - \mu)^\top}$, $L$ is the Cholesky decomposition of $\Sigma$, and $\tilde{y}_i := \erf^{-1}(\tilde{x}_i)$, where $\tilde{x}_i \in \mathcal{D}_c$ are the original quadrature nodes in the hypercube domain $(-1,1)^d$. Using this bijection, for a function $\varphi : \mathbb{R}^d \to \mathbb{R}$, we define $d$-dimensional quadrature with $N$ nodes as follows:
\begin{equation}
  Q^d_N \left[ \varphi \right] \coloneqq \int_{\mathbb{R}^d} \varphi (\tilde{y}) \, d\tilde{y} \approx \sum_{i=1}^N w_{s,i} \, \varphi(\tilde{y}_i),
  \label{eq:Numerical_integration_modified}
\end{equation}
where $w_{s,i} = \left(\frac{1}{2}\sqrt{\pi}\right)^d \exp(\norm{\tilde{y}_i}^2) w_i$. The cumulant-generating function is approximated by $\psi^N(\theta)$ as follows:
\begin{equation}
  \psi^N(\theta) \coloneqq \log\left( Q^d_N\left[\exp(c^\top(\phi_\xi)\theta) 2^{\frac{d}{2}}\det(L)\right]\right). \label{eq:psi_N}
\end{equation}
Furthermore, for a function $\varphi(x):\mathcal{X} \to \mathbb{R}$, we approximate its expectation with respect to $p_\theta$ as follows:
\begin{equation}
  \expvalc{\theta}{\varphi}{N} \coloneqq Q^{d}_N \left[ \varphi(\phi_\xi) \exp(c^\top(\phi_\xi)\theta - \psi^N(\theta)) 2^{\frac{d}{2}}\det(L)\right]. \label{eq:Numerical_expectation}
\end{equation}
By Assumption \ref{asm:x_and_x_square_in_c1}, elements of $\expvalb{\theta_t}{x}$ and $\expvalb{\theta_t}{xx^\top}$ are included in $\eta(\theta_t) = \E_{\theta_t}[c]$. This implies that there exist $T_\mu \in \mathbb{R}^{d \times m}$ and a linear map $\Phi_\Sigma: \mathbb{R}^{m} \to \mathbb{R}^{d \times d}$ such that $\mu_t = T_\mu \eta(\theta_t)$ and $\expvalb{\theta_t}{xx^\top} = \Phi_\Sigma(\eta(\theta_t))$. Therefore, the prediction step of the projection filter can be implemented by solving the following differential equation for $t \in ((k-1)\Delta t, k \Delta t)$:
\begin{gather}
  \begin{aligned}
    \dv{\eta_t}{t} &= \expvalc{\theta_t}{\mathcal{L}(c)}{N},&
    \dv{\theta_t}{t} &= \left( g^N(\theta_t) \right)^{-1} \dv{\eta_t}{t}, \\
    \dv{\mu_t}{t} &= T_\mu \dv{\eta_t}{t},&
    \dv{\Sigma_t}{t} &= \Phi_\Sigma\left(\dv{\eta_t}{t}\right) - \dv{\mu_t}{t}^\top \mu_t - \dv{\mu_t}{t} \mu_t^\top.
  \end{aligned}\label{eqs:ode_prediction_step}
\end{gather}

\begin{algorithm}
  \caption{Single-Step Projection Filter}\label{alg:projection_filter}
  \begin{algorithmic}[1]
  \Require $\theta_{k-1},y_k,\xi_{k-1}$
  \State $\theta_k^- , \xi_k^- \gets$ \Call{Solve ODEs \eqref{eqs:ode_prediction_step}}{$\theta_{k-1}; \xi_{k-1}, \Delta t$}
  \State $\theta_k \gets$ \Call{Bayesian Update \eqref{eq:ExactBayesianUpdate}}{$\theta_k^-, y_k$}
  % \State $g(\theta_k) \gets \pdv[2]{\psi^N(\theta_k)}{\theta}$ \Comment{via automatic differentiation}
  \State $\eta(\theta_k) \gets \pdv{\psi^N(\theta_k)}{\theta}$ \Comment{via automatic differentiation}
  \State $\mu_k \gets T_\mu \eta(\theta_k)$
  \State $\Sigma_k \gets \Phi_\Sigma(\eta(\theta_k)) - \mu_k \mu_k^\top$
  \State $\xi_k \gets (\mu_k, \Sigma_k)$ \Comment{Update bijection parameter}
  \State \Return $\theta_k, \xi_k$
  % \EndProcedure
  \end{algorithmic}
\end{algorithm}

The single-step implementation of the proposed continuous-discrete projection filter is presented in Algorithm \ref{alg:projection_filter}. 

Based on our numerical experiments (for details, see Section \ref{sec:Numerical_Simulations}), we noticed that Algorithm \ref{alg:projection_filter} can be unstable for some random measurement record samples. In particular, the approximated Fisher information matrix $g^N(\theta_t)$ can have eigenvalues less than or equal to zero, which leads to numerical instability when it is used to propagate ODEs \eqref{eqs:ode_prediction_step}. 
In this work, we modify the Riemannian gradient in the ODE \eqref{eqs:ode_prediction_step} by adjusting the Fisher information matrix according to Algorithm \ref{alg:riem_grad} that truncates the eigenvalues of Fisher information matrix's inverse and limit $\tfrac{d\theta}{dt}$. Since $g^N(\theta)$ is symmetric, $g^N(\theta) = \sum_i \lambda_i v_i v_i^\top$ with eigenvalues $\lambda_1 \geq \lambda_2 \geq \cdots \geq \lambda_m$ and orthonormal eigenvectors $v_i$. Algorithm \ref{alg:riem_grad} ensures that the natural gradient is computed only on the eigen-subspace associated with eigenvalues larger than $\epsilon$. As is clear, in Algorithm \ref{alg:riem_grad}, when all eigenvalues are above $\epsilon$, and $\norm{w}_2 < M$, then $w = g^N(\theta)^{-1} v$. This sets the algorithm apart from the Tikhonov regularization approach where $g^N(\theta)+\lambda I$ is used to compute the natural gradient. First of all, this not only introduces unnecessary bias in the natural gradient computation, but it is also unclear how to select $\lambda$ optimally, since the eigenvalues of $g^N(\theta)$ can vary significantly during the simulation. Furthermore, the parameter $\epsilon$ in Algorithm \ref{alg:riem_grad} has a clear interpretation as the stiffness control for the natural parameter dynamics as the following proposition shows. 

\begin{algorithm}
\caption{Eigenvalue Truncated Riemannian Gradient}
\label{alg:riem_grad}
\begin{algorithmic}[1]
\Require Fisher matrix $g \in \mathbb{R}^{m \times m}$, Euclidean gradient vector $v \in \mathbb{R}^m$, threshold $\epsilon \ge 0$, maximum norm $M > 0$
\State $g \gets \frac{1}{2}(g + g^\top)$ \Comment{Ensure the matrix is symmetric}
\State $(\Lambda, V) \gets \mathrm{eigen}(g)$ \hfill \Comment{where $\Lambda = \mathrm{diag}(\lambda_1, \dots, \lambda_m)$}
\State $s \gets 0 \in \mathbb{R}^m$ \Comment{Set initial reciprocal vector to zero}
\For{$i = 1$ \textbf{to} $m$}
    \If{$\lambda_i > \epsilon$}
        \State $s_i \gets 1 / \lambda_i$ \Comment{Only use $\lambda_i>\epsilon$}
    \EndIf
\EndFor
\State $w \gets V S_\epsilon V^\top v$ \Comment{Here, $S_\epsilon = \mathrm{diag}(s)$}
\State $w \gets w \cdot \frac{\min(\|w\|_2, M)}{\|w\|_2}$ \Comment{Apply norm clipping}
\Return $w$
\end{algorithmic}
\end{algorithm}

  \begin{prop}\label{prp:stiffness_bound_main}
    Consider the natural parameter's dynamic given in \eqref{eqs:ode_prediction_step}, and let $\omega(\theta) = (g^N(\theta))^{-1}\mathbb{E}^N_\theta[\mathcal{L}(c)]$. Let $g^N(\theta) = V \Lambda V^\top$ with eigenvalues $\lambda_1 \geq \cdots, \geq \lambda_m$. Let $S_\epsilon$ be the diagonal matrix constructed as per Algorithm \ref{alg:riem_grad}. Assume that the following conditions are satisfied:
    \begin{enumerate}
    \item $\max_{k \in \{1,\ldots,m\}}\|\mathrm{Cov}_\theta^N(\mathcal{L}(c),c_k)\|_2 \leq \bar{C}$,
    \item $\max_{k \in \{1,\ldots,m\}} \|\frac{\partial g^N(\theta)}{\partial \theta_k}\|_2 \leq \bar{T}$,
    \item $\|\mathbb{E}^N_\theta[\mathcal{L}(c)]\|_2 \leq \bar{V}$,
    \item The minimum eigenvalue gap among active eigenvalues satisfies $\delta \coloneqq \min_{i,j \in \mathcal{A}, i \neq j} |\lambda_i - \lambda_j| > 0$.
  \end{enumerate}
  Let $\omega_\epsilon (\theta) = G_\epsilon(\theta)\E_\theta^N[\mathcal{L}(c)]$, where $G_\epsilon (\theta) \coloneqq VS_\epsilon V^\top$. Then the Jacobian of $\omega_\epsilon$ satisfies
  \begin{equation}
    \norm{\dfrac{\partial \omega_\epsilon}{\partial \theta}}_2 \leq \frac{\bar{C}}{\epsilon} + \left( \frac{1}{\epsilon} + \frac{2}{\delta} \right) \frac{\bar{T}\bar{V}}{\epsilon}.\label{eq:df_epsilon_d_theta_bound}
  \end{equation}
  \end{prop}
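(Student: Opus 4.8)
The plan is to differentiate $\omega_\epsilon(\theta) = G_\epsilon(\theta)\,\E^N_\theta[\mathcal{L}(c)]$ by the product rule and bound the two resulting terms separately. Writing $\partial_k \coloneqq \partial/\partial\theta_k$ and $A_k \coloneqq \partial_k g^N(\theta)$, we have $\partial_k\omega_\epsilon = (\partial_k G_\epsilon)\,\E^N_\theta[\mathcal{L}(c)] + G_\epsilon\,\partial_k\E^N_\theta[\mathcal{L}(c)]$. I would first dispose of the second term. The exponential-family differentiation identity gives $\partial_k\E^N_\theta[\mathcal{L}(c)] = \mathrm{Cov}^N_\theta(\mathcal{L}(c), c_k)$, while $G_\epsilon = VS_\epsilon V^\top$ has spectral norm at most $1/\epsilon$, since each active reciprocal eigenvalue obeys $1/\lambda_i < 1/\epsilon$ and the inactive ones are set to zero. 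Combined with the assumed bound $\norm{\mathrm{Cov}^N_\theta(\mathcal{L}(c),c_k)}_2 \le \bar C$, this term is bounded by $\bar C/\epsilon$, which furnishes the first summand of \eqref{eq:df_epsilon_d_theta_bound}.

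The core of the argument is the bound on $\norm{\partial_k G_\epsilon}_2$, for which I would use first-order eigen-perturbation of $g^N(\theta) = V\Lambda V^\top$. Using $\partial_k\lambda_i = v_i^\top A_k v_i$ and the eigenvector sensitivity $\partial_k v_i = \sum_{j\neq i}\tfrac{v_j^\top A_k v_i}{\lambda_i-\lambda_j}v_j$, differentiating $G_\epsilon = \sum_{i\in\mathcal{A}}\lambda_i^{-1}v_iv_i^\top$ splits into a diagonal (eigenvalue) and an off-diagonal (eigenvector) part:
\begin{equation*}
\partial_k G_\epsilon = -\sum_{i\in\mathcal{A}}\frac{v_i^\top A_k v_i}{\lambda_i^2}v_iv_i^\top + \sum_{i\in\mathcal{A}}\frac{1}{\lambda_i}\left(\partial_k v_i\,v_i^\top + v_i\,\partial_k v_i^\top\right).
\end{equation*}
The diagonal part is a sum of mutually orthogonal rank-one matrices, so its norm is $\max_{i\in\mathcal{A}}\lambda_i^{-2}\abs{v_i^\top A_k v_i} \le \bar T/\epsilon^2$ by the assumed bound $\norm{A_k}_2\le\bar T$; this produces the $\tfrac1\epsilon\cdot\tfrac{\bar T}{\epsilon}$ contribution. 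For the off-diagonal part, the eigenvector sensitivities are controlled by the spectral gap: when every relevant active gap is at least $\delta$, one has $\norm{\partial_k v_i}_2 \le \norm{A_k v_i}_2/\delta \le \bar T/\delta$, so each symmetric increment $\partial_k v_i\,v_i^\top + v_i\,\partial_k v_i^\top$ (with $\partial_k v_i \perp v_i$) has norm $\norm{\partial_k v_i}_2$, and the prefactor $\lambda_i^{-1}\le\epsilon^{-1}$ yields a contribution of the form $\tfrac{2}{\delta}\cdot\tfrac{\bar T}{\epsilon}$, the factor $2$ accounting for the two symmetric pieces. Multiplying $\norm{\partial_k G_\epsilon}_2$ by $\norm{\E^N_\theta[\mathcal{L}(c)]}_2\le\bar V$ and adding the covariance term recovers \eqref{eq:df_epsilon_d_theta_bound} columnwise; assembling the per-coordinate bounds then gives the operator-norm bound on the Jacobian.

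I expect the eigenvector-sensitivity step to be the main obstacle, on two counts. First, the perturbation formula for $\partial_k v_i$ is valid only for simple eigenvalues and its $1/(\lambda_i-\lambda_j)$ factors blow up as eigenvalues coalesce; this is precisely why the positive active gap $\delta$ is assumed, and the bound degrades as $\delta\to 0$. Second, and more delicate, the rotations $\partial_k v_i$ couple active eigenvectors to inactive ones across the truncation threshold $\epsilon$, whose separation is \emph{not} governed by the active--active gap $\delta$; controlling these cross-threshold contributions is the crux. I would address this by exploiting the cancellation in the active--active block, where pairing the $(i,j)$ and $(j,i)$ increments replaces $1/(\lambda_i-\lambda_j)$ by $-1/(\lambda_i\lambda_j)$ and thereby removes the gap, combined with either a spectral-gap-at-$\epsilon$ hypothesis or the observation that $G_\epsilon$ is a genuinely differentiable matrix function on a neighborhood where the active index set $\mathcal{A}$ is locally constant. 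Finally, I would check that the exponential-family identity $\partial_k\E^N_\theta[\mathcal{L}(c)] = \mathrm{Cov}^N_\theta(\mathcal{L}(c),c_k)$ transfers to the sparse-grid-approximated expectation $\E^N_\theta$, justifying it by differentiating the quadrature-normalized density $\exp(c^\top\theta-\psi^N(\theta))$ directly.
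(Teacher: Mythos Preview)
Your approach matches the paper's: product-rule split of $\partial_k\omega_\epsilon$, the covariance term bounded by $\bar C/\epsilon$ via $\|G_\epsilon\|_2\le 1/\epsilon$ and the exponential-family identity, and an eigenvalue/eigenvector decomposition of $\partial_k G_\epsilon$. The only difference is packaging. The paper writes $\partial_k G_\epsilon = V(\Omega_k S_\epsilon + \partial_k S_\epsilon - S_\epsilon\Omega_k^\top)V^\top$ with $\Omega_k \coloneqq V^\top\partial_k V$ skew-symmetric, and then invokes Weyl's inequality for $|\partial_k\lambda_i|\le\|A_k\|_2$ (hence $\|\partial_k S_\epsilon\|_2\le\bar T/\epsilon^2$) and the Davis--Kahan theorem for $\|\Omega_k\|_2\le\|A_k\|_2/\delta$ (hence $\|\Omega_k S_\epsilon - S_\epsilon\Omega_k^\top\|_2\le 2\bar T/(\delta\epsilon)$), in place of your explicit first-order perturbation formulas $\partial_k\lambda_i = v_i^\top A_k v_i$ and $\partial_k v_i = \sum_{j\ne i}\tfrac{v_j^\top A_k v_i}{\lambda_i-\lambda_j}v_j$. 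The resulting bounds are identical, and the paper likewise assembles the columnwise estimates into an operator-norm bound on the Jacobian. Your check that the covariance identity survives the sparse-grid approximation is a point the paper simply asserts.

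The cross-threshold concern you single out is real and is not handled more carefully in the paper: its Davis--Kahan step bounds all of $\Omega_k$ by $\bar T/\delta$, yet $\delta$ is defined only as the minimum gap among \emph{active} eigenvalues, whereas entries of $\Omega_k$ coupling active to inactive directions are governed by active--inactive gaps. Your proposed remedies (locally constant active set, or reading $\delta$ as the minimum relevant gap including the threshold separation) are exactly what is needed to make either argument airtight.
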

  \begin{proof}
  Using $v(\theta) = \E_\theta^N[\mathcal{L}(c)]$, the Jacobian of $\omega_\epsilon$ with respect to $\theta_k$ is given by
  \begin{equation*}
  \frac{\partial \omega_\epsilon}{\partial \theta_k} = G_\epsilon \frac{\partial v}{\partial \theta_k} + \frac{\partial G_\epsilon}{\partial \theta_k} v.
  \end{equation*}
  For the first term, observe that $\norm{G_\epsilon}_2 \leq 1/\epsilon$ by the definitions of $G_\epsilon(\theta)$ and $S_\epsilon$ in Algorithm \ref{alg:riem_grad}. Further, using the exponential family identity $\tfrac{\partial}{\partial \theta_k} \E_\theta[\varphi] = \mathrm{Cov}^N_\theta(\varphi,c_k)$, along with Hypothesis 1, we obtain
  \begin{equation}
  \left\|G_\epsilon \frac{\partial v}{\partial \theta_k}\right\|_2 \leq \|G_\epsilon\|_2 \|\mathrm{Cov}^N_\theta(\mathcal{L}(c), c_k)\|_2 \leq \frac{\bar{C}}{\epsilon}.\label{eq:bound_1}
  \end{equation}
  For the second term, differentiating $G_\epsilon(\theta)$ using product rule yields
  \begin{equation}
  \frac{\partial G_\epsilon}{\partial \theta_k} = V \left( \Omega_k S_\epsilon + \frac{\partial S_\epsilon}{\partial \theta_k} - S_\epsilon \Omega_k^\top \right) V^\top,\label{eq:dG_d_theta_second_term}
  \end{equation} 
  where $\Omega_k \coloneqq V^\top \tfrac{\partial V}{\partial \theta_k}$ is skew symmetric since $V^\top V = I$.
  Notice that on the active indices where $\lambda_i > \epsilon$, we have the $i$-th diagonal of $S_\epsilon$ given by $1/\lambda_i$, hence $\tfrac{\partial (S_\epsilon)_{ii}}{\partial \theta_k} = -\tfrac{1}{\lambda_i^2} \tfrac{\partial \lambda_i}{\partial \theta_k}$.
  Let $e_k \in \mathbb{R}^m$ be a vector with zero elements except at the $k$-th row, which is equal to $1$. Then $\frac{\partial \lambda_i(g^N(\theta))}{\partial \theta_k} = \lim_{h \to 0} \frac{1}{h}[\lambda_i(g^N(\theta + e_k h)) - \lambda_i(g^N(\theta))]$, also $\frac{\partial g^N(\theta)}{\partial \theta_k} = \lim_{h \to 0} \frac{1}{h}[g^N(\theta + e_k h) - g^N(\theta)]$. By Weyl's inequality \cite[Theorem 4.3.1]{horn2012}, for Hermitian matrices $A$ and $B$, $|\lambda_i(A+B) - \lambda_i(A)| \leq \|B\|_2$. Applying this with $A = g^N(\theta)$ and $B = g^N(\theta + e_k h) - g^N(\theta)$ gives $|\lambda_i(g^N(\theta + e_k h)) - \lambda_i(g^N(\theta))| \leq \|g^N(\theta + e_k h) - g^N(\theta)\|_2$. Dividing by $h$ and taking $h \to 0$ yields $|\frac{\partial \lambda_i}{\partial \theta_k}| \leq \|\frac{\partial g^N}{\partial \theta_k}\|_2$, hence
  \begin{equation}
  \left\|\frac{\partial S_\epsilon}{\partial \theta_k}\right\|_2 \leq \frac{1}{\epsilon^2} \left\|\frac{\partial g^N}{\partial \theta_k}\right\|_2 \leq \frac{\bar{T}}{\epsilon^2}.\label{eq:bound_2_1}
  \end{equation}

  For the commutator term in \eqref{eq:dG_d_theta_second_term}, using Davis--Kahan theorem \cite{davis1970}, we have $\norm{\Omega_k}_2 \leq \frac{1}{\delta} \norm{\tfrac{\partial g^N(\theta)}{\partial \theta_k}}_2$, where $\delta$ is the minimum eigenvalue gap among active eigenvalues defined in Hypothesis 4. Therefore, since $\norm{S_\epsilon}_2 \leq 1/\epsilon$,
  \begin{equation}
  \|\Omega_k S_\epsilon - S_\epsilon \Omega_k^\top\|_2 \leq 2\|\Omega_k\|_2 \|S_\epsilon\|_2 \leq \frac{2}{\delta \epsilon} \left\|\frac{\partial g^N}{\partial \theta_k}\right\|_2 \leq \frac{2\bar{T}}{\delta \epsilon}.\label{eq:bound_2_2}
  \end{equation}

Combining \eqref{eq:bound_1},\eqref{eq:bound_2_1},\eqref{eq:bound_2_2} and using Hypothesis 3, yields
\begin{equation*}
  \left\|\frac{\partial G_\epsilon}{\partial \theta_k} v\right\|_2 \leq \left\|\frac{\partial G_\epsilon}{\partial \theta_k}\right\|_2 \|v\|_2 \leq \left( \frac{2\bar{T}}{\delta \epsilon} + \frac{\bar{T}}{\epsilon^2} \right) \bar{V} = \left( \frac{2}{\delta} + \frac{1}{\epsilon} \right) \frac{\bar{T}\bar{V}}{\epsilon}.
\end{equation*}

Finally, since $\frac{\partial \omega_\epsilon}{\partial \theta}$ is an $m \times m$ matrix with columns $\frac{\partial \omega_\epsilon}{\partial \theta_k}$, its spectral norm satisfies \eqref{eq:df_epsilon_d_theta_bound}.  \qedhere
  \end{proof}
Finally, we collect some remarks that address some aspects of Algorithm \ref{alg:riem_grad} and Proposition \ref{prp:stiffness_bound_main}.
  \begin{rem}
    Proposition \ref{prp:stiffness_bound_main} shows how the parameter $\epsilon$ relates to the stiffness of \eqref{eqs:ode_prediction_step} via \eqref{eq:df_epsilon_d_theta_bound}. For near-singular $g^N(\theta)$, the second term dominates stiffness. A larger $\epsilon$ value lowers the bound \eqref{eq:df_epsilon_d_theta_bound} and could allow adaptive ODE solvers to take larger time steps while also discarding gradient information along low-curvature directions.
  \end{rem}
  \begin{rem}
    Let $\theta_t \in \Theta$ and let $w$ be the output of Algorithm \ref{alg:riem_grad}, where $\epsilon \geq0$ and $M>0$. Define the maximum radius from $\theta$ to the boundary of $\Theta$ as $r(\theta) \coloneqq \sup \left\{ r> 0 : \mathcal{B}(\theta, r) \subset \Theta \right\}$, where $\mathcal{B}(\theta, r)$ is an open ball with center at $\theta$ and radius $r$. If the step size satisfies $\delta t \leq \tfrac{r(\theta_t)}{M}$, then $\theta_{t+\delta t}\coloneqq \theta_t + \delta t w \in \Theta$. In practice, $r(\theta_t)$ varies along the trajectory and is generally unknown. Therefore, Algorithm \ref{alg:riem_grad} with fixed $M$ does not guarantee $\theta_t \in \Theta, \forall t >0$. Notice that \eqref{eqs:ode_prediction_step} is also known as natural gradient in deep learning literature \cite{martens2020}. As the natural gradient is usually used for optimization purposes, either the analysis of whether the parameter can cross outside the parameter set is completely neglected or the parameter space is assumed to be $\mathbb{R}^n$ and hence parameter constraints can be handled via reparametrization. In our case, the parameter space $\Theta$ for regular exponential families is in general a proper subset of $\mathbb{R}^m$. 
  \end{rem}
  \begin{rem}
    To completely force the solution $\theta_t \in \Theta$ for all $t$, one would need either an ODE solver that abides by the manifold structure of $\Theta$ or a projection applied at each ODE step to project $\theta_{t}$ onto $\Theta$. However, unless the exponential family is Gaussian, there is no trivial way to do so, since $\theta$ corresponds to a symmetric tensor of dimension $d$ of order greater than two; see \cite{emzir2024b,schmudgen2017}.
  \end{rem}

\section{Numerical Simulations}\label{sec:Numerical_Simulations}
\subsection{Two Dimensional System}\label{sec:vdp_example}
Consider the modified stochastic van der Pol dynamic with nonlinear measurement, defined as follows:
\begin{subequations}
  \begin{align}
    d \begin{bmatrix} x_{1,t} \\ x_{2,t} \end{bmatrix} &= \begin{bmatrix} x_{2,t} \\ \mu_s (1 - x_{1,t}^2) x_{2,t} - x_{1,t} \end{bmatrix} dt + \begin{bmatrix} 0 \\ \sigma_w \end{bmatrix} dW_t, \label{eq:vdp_state_dynamic} \\
    y_k &=  \begin{bmatrix} \sin(x_{1,k}) & \sin(x_{2,k}) \end{bmatrix}^\top + \sigma_v v_k. \label{eq:vdp_measurement}
\end{align}
\end{subequations}
In these equations, $x_k \coloneqq x_{k \Delta t},~y_k \coloneqq y_{k \Delta t},~ \mu_s = 0.5, ~\sigma_v = 1, \sigma_w = 2$, $~\Delta t = 1$, and $v_k \sim \mathcal{N}(0,I)$. The initial density is set to be a mixture Gaussian density $p_0 = 0.5 \mathcal{N}([1 , -1 ]^\top, I) + 0.5 \mathcal{N}([-1 , 1 ]^\top, I)$. In addition, we choose $c_1 = \{x^{\boldsymbol{i}} : |\boldsymbol{i}| \leq 4\}$. The use of monomials of up to order $4$ is designed to capture not only the mean and the covariance, but also the skewness and kurtosis of the empirical distribution. We augment $c_1$ with $c_2 = [\sin(x_1), \sin(x_2), \sin(x_1)\sin(x_2), \sin(x_1)^2,\sin(x_2)^2]^\top$ such that the natural statistics $c^\top = [c_1^\top,c_2^\top]$ satisfy the requirements of Proposition \ref{prp:conjugate_condition}. To propagate the projected Fokker--Planck equation, we use the Gauss--Kronrod sparse-grid with level 8. The total number of quadrature nodes for this sparse-grid scheme is $20,\!833$. The ODEs in Algorithm \ref{alg:projection_filter} are solved using the Tsitouras 5 scheme \cite{tsitouras2011} via DiffraX package \cite{kidger2022}. For this, we set relative tolerance to be $10^{-3}$ and absolute tolerance to be $10^{-6}$. The ODE solver time step is tuned adaptively via a PID controller with default settings. The parameters $(\epsilon,M)$ in Algorithm \ref{alg:riem_grad} were set to be varied, first we set them to $(-\infty,\infty)$, which means no regularization at all. We will refer to the simulation result using this regularizer setting as $\mathrm{Proj}$. Then we also tried $(0,\infty)$  which means $g^N(\theta)$ is projected to the subspace of non-negative eigenspace, and finally we set them to $(1\times 10^{-5}, 10^2)$. Both are referred to as $\mathrm{Proj-0}$ and $\mathrm{Proj-B}$.

\subsection{Four Dimensional System}\label{sec:fhn_example}
In this section, we consider the application of the proposed projection filter algorithm to a four-dimensional stochastic system. The system we consider corresponds to a coupled FitzHugh--Nagumo (FhN) stochastic system \cite{quininao2020}; see also \cite{voss2004,smith2019}. The deterministic version of this model is central in computational neuroscience \cite{fitzhugh1955,nagumo1962}, and is normally used to analyze neural network activity with electrical coupling in a simplified framework. The FhN SDEs and the corresponding measurement process are given as follows
\begin{subequations}
  \begin{align}
    \begin{bmatrix} d x_{1,t} \\ d x_{2,t} \\ d x_{3,t}  \\ d x_{4,t} \end{bmatrix} &=
    \begin{bmatrix} x_{1,t} - \dfrac{x_{1,t}^3}{3} - x_{2,t} + i_1 + \epsilon_{1,2} (x_{3,t} - x_{1,t}) \\
      \dfrac{x_{1,t} + a - b x_{2,t}}{\tau} \\
      x_{3,t} - \dfrac{x_{3,t}^3}{3} - x_{4,t} + i_2 + \epsilon_{2,1} (x_{1,t} - x_{3,t})\\
      \dfrac{x_{3,t} + a - b x_{4,t}}{\tau}
    \end{bmatrix} dt +  \sigma_w dW_t, \label{eq:fhn_state_dynamic} \\
    y_k &=  h(x_k) + \sigma_v v_k.
\end{align}
\end{subequations}
In this equation, $x_1$ and $x_3$ are the membrane potential of the first and second nerves, respectively. The state $x_2$ and $x_4$ are the recovery variables, and $i_1$ and $i_2$ are the external currents, which are normally set between 0 and $\frac{1}{2}$. The parameters of this dynamics are the threshold offset $a = 0.7$, the recovery strength $b=0.8$, time scale ratio $\tau=12.5$, the relaxation coefficients $\epsilon_{1,2}=\epsilon_{2,1}=0.1$, the noise strength for process and measurement (respectively) $\sigma_w=1,\sigma_v=2$, and the external currents $i_1=0.25, i_2=0.5$.
For this simulation, we set the measurement function as
\begin{equation}
 h(x) = [x_{1},x_{2},x_{3},x_{4}]^\top. \label{eq:fhn_measurement}
\end{equation}
The measurement is collected every $\Delta t = 0.25 s$. The initial density is set to be a mixture Gaussian density $p_0 = 0.5 \mathcal{N}([1,1,1,1]^\top, I) + 0.5 \mathcal{N}(-[1,1,1,1]^\top, I)$. Further, we choose $c_1 = \{x^{\boldsymbol{i}} : |\boldsymbol{i}| \leq 4\}$. We notice that $h$ is already in the span of $c_1$, and hence technically we do not need to expand $c_1$ with extra statistics to satisfy Proposition \ref{prp:conjugate_condition}. However, we deliberately add a high order statistic $c_2 = \sum_{i=1}^d x_i^6$, to demonstrate the feasibility of using high-order monomials beyond order four. Notice that in this particular simulation, since the measurement function is linear, we expect that the ensemble Kalman filter will perform better than the previous simulation. To propagate the projected Fokker--Planck equation, we use the Gauss--Kronrod sparse-grid with level 6. The total number of quadrature nodes for this sparse-grid scheme is $186,\!345$. Similar ODE solver settings from the previous section are used again here, except that we fixed $\epsilon=10^{-3}$ and $M=10$ in Algorithm \ref{alg:riem_grad}.

\subsection{Metrics}\label{sec:metrics}
Denote the samples from the particle filter at time $k$ by $\{x_k^{(i)}\}_{i=1}^N$. Furthermore, let the approximated density from method $\mathbf{a}$, be denoted as $p_{(\mathbf{a},k)}(\cdot)$. Let us denote $\E_{(\mathbf{a},k)} [\phi]$ as the approximated expectation of a statistic $\phi$ at time $k$ under the approximated density obtained by method $\mathbf{a}$.
We use the following metrics to compare the performance of the methods:
\begin{enumerate}
     
    \item Hellinger Distance
    \begin{equation*}
        H(p_k, p_{(a,k)}) = \sqrt{\frac{1}{2} \int_{\mathcal{D}} (\sqrt{p_k} - \sqrt{p_{(a,k)}})^2 \, dx }.
    \end{equation*}
    We set $\mathcal{D}$ to be the region within 3 Mahalanobis distance from the Gaussian density that is used for bijection parameters \cite{emzir2023a}, and limit the numerical Hellinger distance's maximum value to $1$.
    \item Sliced Wasserstein-1 Distance
    \begin{equation}
      \mathrm{SW}_1(p_k, p_{(\mathbf{a},k)}) \approx \frac{1}{N_{w}} \sum_{i=1}^{N_{w}} \mathrm{W}_1(\Pi_{i} p_k, \Pi_{i}  p_{(\mathbf{a},k)})
    \end{equation}
    where $\Pi_i$ is a uniformly distributed random projection of a density onto a one-dimensional space, $\mathrm{W}_1$ is the one-dimensional Wasserstein-1 distance, and $N_w$ is the number of random projections to one-dimensional space \cite{bonneel2015}. The $p$-Wasserstein distance itself can be defined as follows (see \cite{kolouri2019} for more details). Let $\mu$ and $\nu$ be two absolutely continuous (with respect to the Lebesgue measure) probability measures. The $p$-Wasserstein distance is given by \cite{kolouri2019}
    \begin{equation*}
      W_p(\mu, \nu) = \left( \inf_{f \in MP(\mu, \nu)} \int_{\mathcal{X}} \| x - f(x)\|_2^p d\mu \right)^{\frac{1}{p}} \label{eq:W_p}
    \end{equation*}
    where, $MP(\mu,\nu) \coloneqq \left\{ f: X \to Y \mid f_{\#} \mu = \nu \right\}$ and $f_{\#}\nu$ represents the pushforward of $\mu$ given by
    \begin{equation*}
      \int_A df_{\#}\mu = \int_{f^{-1}(A)} d\mu.
    \end{equation*}
    The $\mathrm{SW}_1$ distance is particularly useful when comparing multivariate distributions, as the Sliced Wasserstein distance is significantly cheaper to compute compared to the original Wasserstein distance \cite{kolouri2019}. For the FitzHugh--Nagumo continuous-discrete filtering example, computing the Hellinger distance via numerical integration becomes computationally demanding. Therefore, for this example, we replace the Hellinger distance with $\mathrm{SW}_1$ distance.

    \item Natural Statistics' Mean Squared-Error (nMSE)
    \begin{equation*}
        \mathrm{nMSE}(\mathbf{a},k) = \frac{1}{N_s} \sum_{i=1}^{N_s} \|c(x_k) - \E_{(\mathbf{a},k)} [c]\|_2^2.
    \end{equation*}
    In this equation $N_s$ is the number of Monte--Carlo simulation runs. By the optimality of the conditional expectation, the true conditional expectations $\mathbb{E}[c|\mathcal{Y}_k]$ will produce the lowest $\mathrm{nMSE}$. Compared to the commonly used mean-squared error estimate, $\text{nMSE}(\mathbf{a},k)$ is a better metric to assess the quality of the approximated conditional densities, since it takes into account the higher moments by the construction of $c$.

    \item Cross Entropy
    \begin{equation*}
        \mathrm{CE} (p_k, p_{(\mathbf{a},k)}) \approx -\frac{1}{N} \sum_{i=1}^N \log(p_{(\mathbf{a},k)}(x_k^{(i)})).
    \end{equation*}

\end{enumerate}

\subsection{Benchmark Methods}\label{sec:benchmark_methods}
We employ the continuous-discrete particle filter with $2.4 \times 10^7$ samples for both Van der Pol and FitzHugh--Nagumo filtering problems, respectively. The empirical densities from the particle samples are considered as the ground truth. We use the Euler--Heun scheme to propagate the SDE \eqref{eq:fhn_state_dynamic} with a time step set to $\delta t=2.5 \times 10^{-2}$. For the Bayesian update, we utilize the systematic resampling method \cite{chopin2020}. Besides the particle filter, we compare the continuous-discrete projection filter against the Ensemble Kalman Filter (EnKF) \cite{evensen2009}, the Gaussian-Sum Filter (GSF) \cite{alspach1972}, and the Particle-Gaussian-Mixture (PGM) filter \cite{raihan2018}. We configure the EnKF to use the same number of samples and SDE solver as the particle filter for a fair comparison. Additionally, we include modified versions of GSF and PGM for comparison:
\begin{itemize}
  \item Sigma-Point Gaussian-sum-filter (SP-GSF): For the modification of the GSF, we compute the prior propagation and the posterior of each Gaussian mixand via sigma-points, using the Gauss--Hermite order 5 scheme.
  \item Particle-Gaussian-mixture with EM Clustering (PGM (EM)): Instead of using the K-means clustering algorithm to obtain means, covariances, and weights of the mixands, we use the Expectation-Maximization (EM) algorithm, where each point is assigned to a mixand based on a categorical distribution. Unlike the K-means algorithm, this clustering approach allows for overlapping mixands, reducing spurious peaks.
\end{itemize}

The Gaussian-sum-based filtering methods (GSF, SP-GSF, PGM (K-means), and PGM (EM)) are initialized with $N_m = 50$ mixands. The first two mixands correspond to those of $p_0$, but with modified weights set to $0.5/(1 + \epsilon_m (N_m - 2))$, where $\epsilon_m = 10^{-2}$. The remaining mixands have weights $\epsilon_m/(1 + \epsilon_m (N_m - 2))$, with means sampled from $\mathcal{N}(0, I)$ and covariances set to $I$. The nonzero value for $\epsilon_m$ ensures that GSF and SP-GSF effectively use all 50 mixands. For the PGMs, setting $\epsilon_m = 0$ leads to numerical instabilities. The number of PGMs' samples is set to ten times the number of quadrature nodes used in the projection filter. Using a lower number of samples might lead to some numerical instabilities. For the Van der Pol problem, the total number of parameters for the Gaussian-mixture methods is $(2 + \tfrac{1}{2}(2\times 3)) N_m = 150$. For the FitzHugh--Nagumo problem, the total number of parameters for the Gaussian-mixture methods is $(4 + \tfrac{1}{2}(4\times 5)) N_m = 700$.

In initializing the projection filter, since the initial density does not belong to $\text{EM}(c)$, we approximate the empirical density $p_0$ with $p_{\theta_0}$, where $\theta_0$ is selected to minimize the Kullback--Leibler divergence $D_{\text{KL}}(p_0 \parallel p_{\theta_0})$. This is achieved by first computing $\expvalb{p_0}{c}$ via the particle samples, and then finding $\theta_0$ using the optimization problem \cite{amari2000},
% \begin{equation*}
$\theta_0 = \arg \max_{\theta \in \Theta} \left(\theta^\top \expvalb{p_0}{c} - \psi(\theta)\right)$,
% \end{equation*}
which we implement using Riemannian gradient descent. Due to the fact that $p_0 \notin \mathrm{EM}(c)$, the projection density $p_{\theta_0}$ resembles the initial density $p_0$ with some noticeable deformations, while the initial parametric densities for GSF, SP-GSF, PGM (K-means), and PGM (EM) are almost identical to $p_0$.
For example, for the Van der Pol example in Section \ref{sec:vdp_example}, the resulting $p_{\theta_0}$ and the corresponding initial Gaussian-mixture densities are depicted in Figure \ref{fig:density_comparison_med_time_index_0}.

  \begin{figure}
  \centering
  \includegraphics[width=\textwidth]{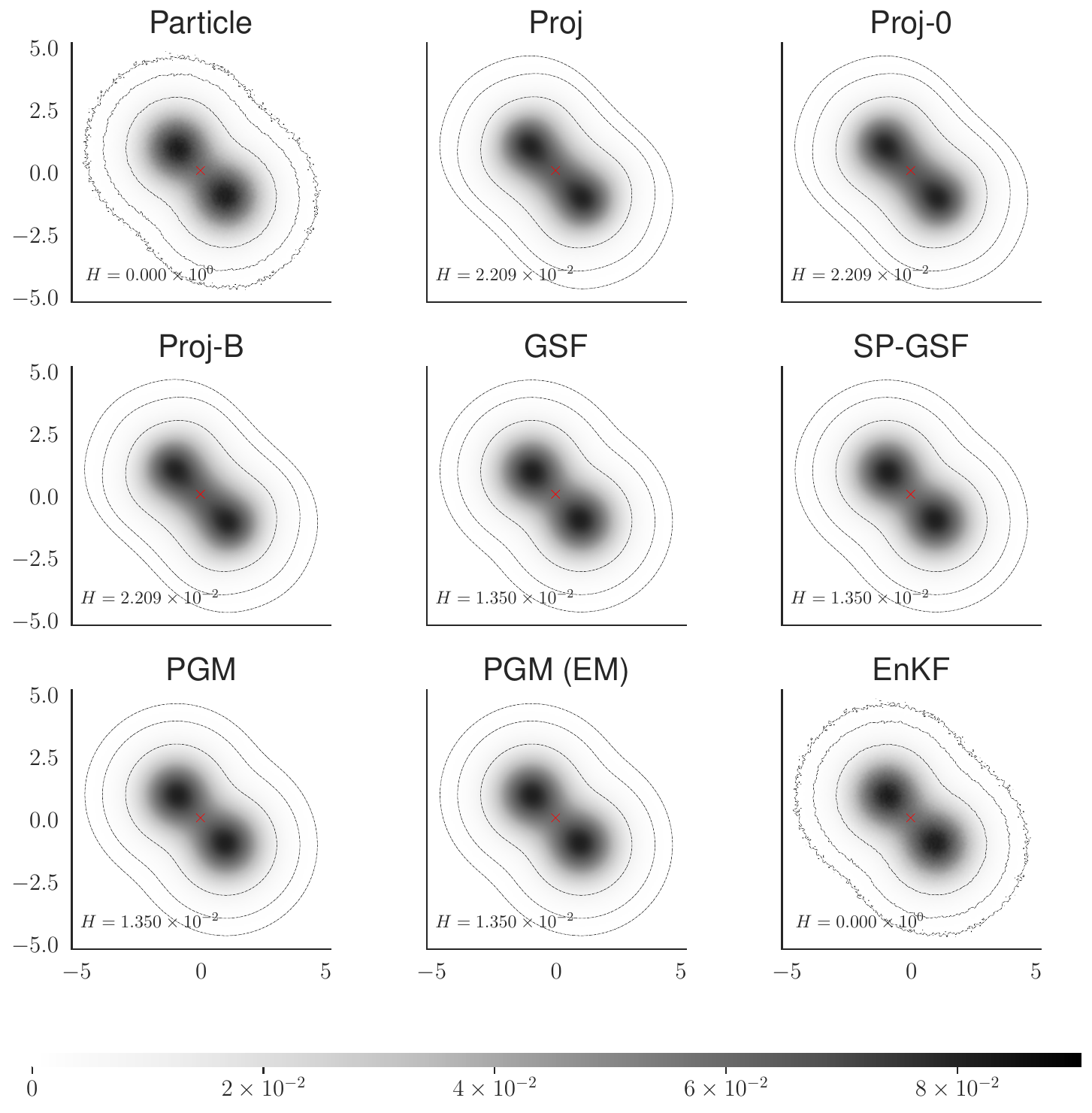}
  \caption{The initial densities $p_0$, the initial projection densities $p_{\theta_0}$ (same for Proj, Proj-0, and Proj-B), and the initial Gaussian-mixture densities for GSF, SP-GSF, PGM (K-means), and PGM (EM). In these plots, the $\times$ sign indicates the location of the real state value. The dash-dotted lines represent the equipotential lines of the densities. To make the comparison easier, we have added contour lines (solid black) and we indicate the Hellinger distance to the empirical density at the bottom of every subplot.}
  \label{fig:density_comparison_med_time_index_0}
\end{figure}

\subsection{Simulation Results}\label{subsec:simulation_results}

\subsubsection{Two Dimensional System}

\begin{figure}
  \centering
  \includegraphics[width=\textwidth]{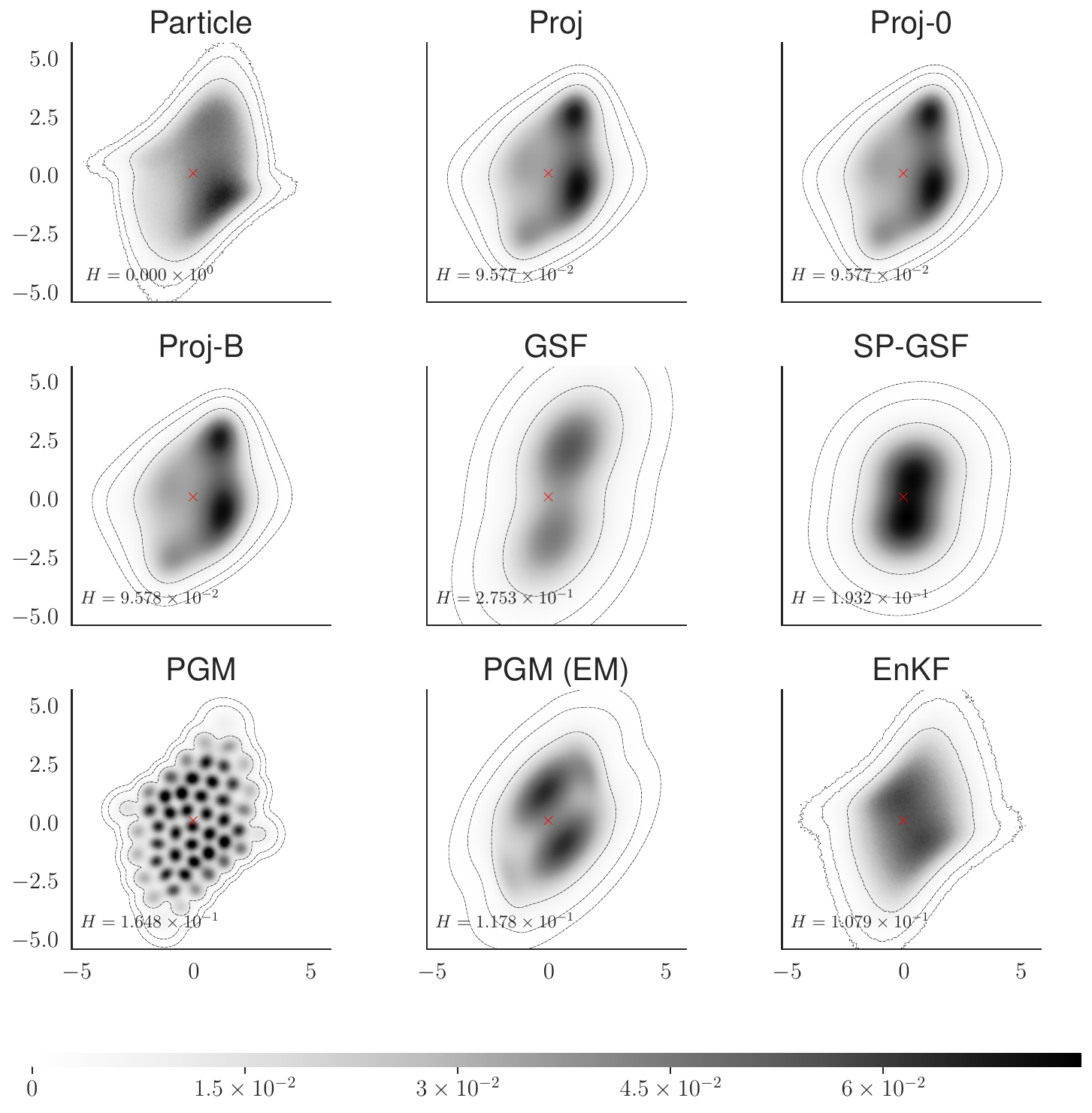}
  \caption{Comparison of the posterior empirical densities from particle filter samples and the approximated posterior densities at $k=1$. No noticeable differences are observed among the projection filters' densities (Proj, Proj-0, and Proj-B).}
  \label{fig:density_comparison_med_time_index_1}
\end{figure}
\begin{figure}
  \centering
  \includegraphics[width=\textwidth]{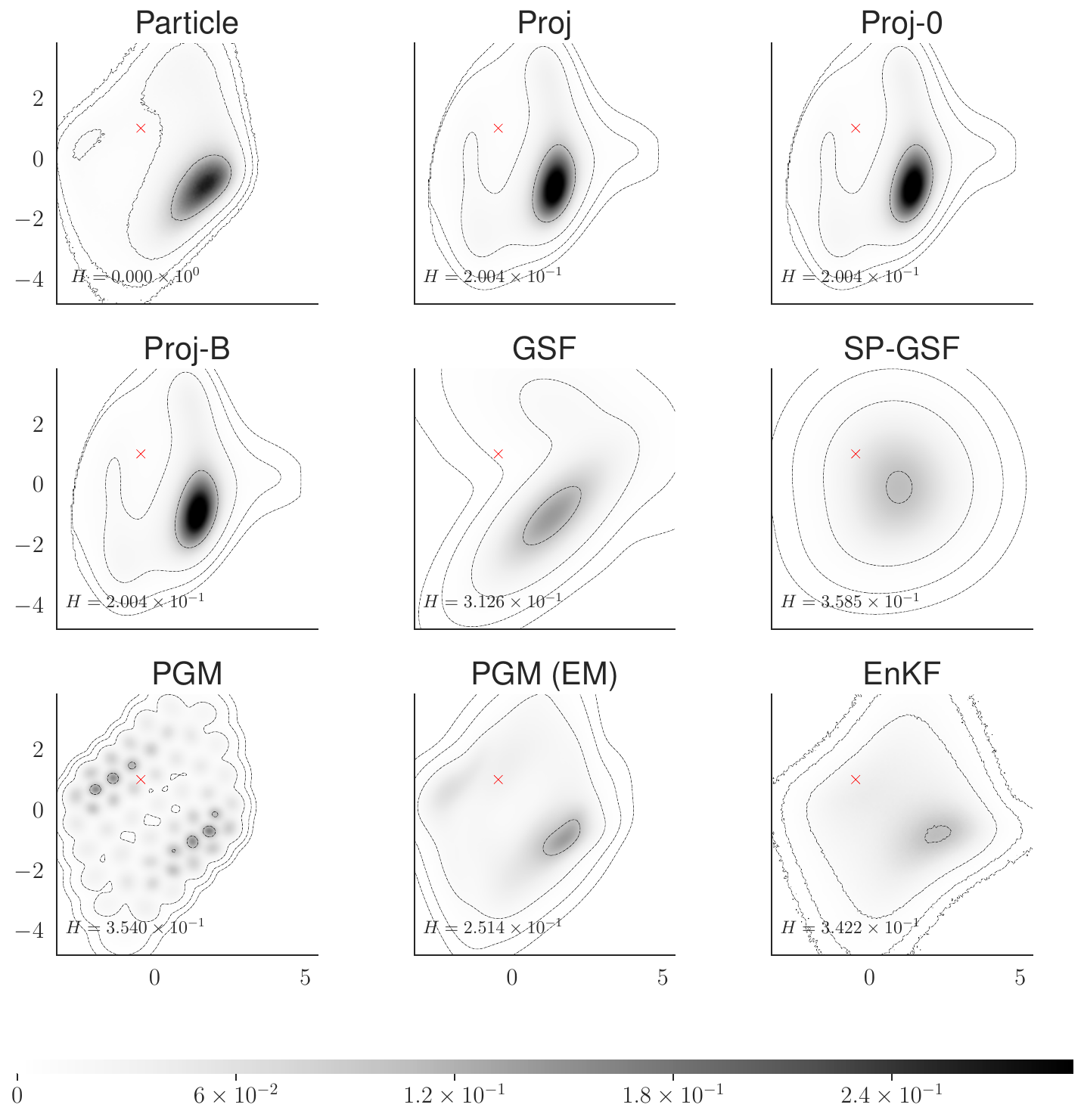}
  \caption{Similar to Figure \ref{fig:density_comparison_med_time_index_1}, but at $k=4$.}
  \label{fig:density_comparison_med_time_index_4}
\end{figure}

  For the Van der Pol filtering problem, the projection filter without regularization (Proj) was able to complete $91 \%$ of the simulation runs. The projection filter with non-negative eigenspace regularization (Proj-0) completed $92 \%$ of simulation runs, while Proj-B completed $99 \%$ of the simulation runs. This shows that the regularization techniques proposed in Algorithm \ref{alg:riem_grad} are effective in improving the robustness of the projection filter implementation.

Regarding the performance of the projection filter, we compare the successful simulation runs for all methods only. In general, there is no noticeable performance deterioration in the regularized projection filter compared to the unregularized one. Based on our earlier description of Algorithm \ref{alg:riem_grad} that the regularization is only active when some eigenvalues of $g^N(\theta)$ are below the thresholds, this observation indicates that during most of the simulation time, the eigenvalues of $g^N(\theta)$ are well-behaved, and the regularization does not make any difference in most cases. In particular, in Figure \ref{fig:density_comparison_med_time_index_1}, we compare the empirical posterior density $p_1$, the projection filters' densities $p_{\theta_1}$ from (Proj, Proj-0, and Proj-B), and the approximated Gaussian-mixture densities obtained from GSF, SP-GSF, PGM (K-means), and PGM (EM). These densities are taken from the simulation instance with the median time-average Hellinger distance $H(p_k, p_{\theta_k})$. From these figures, it is evident that the projection filters' densities resemble the empirical posterior density better compared to EnKF and Gaussian-mixture densities, regardless of the regularization parameters selected. 
Both GSF and SP-GSF's approximated densities are close to a bi-Gaussian density, even with 50 mixands. The PGM with K-means struggles to approximate the posterior density, exhibiting a honeycomb pattern with many spurious peaks. The PGM with EM clustering provides a better approximation compared to PGM with K-means, though it is not as accurate as the projection filters' densities. Lastly, we observe that EnKF-approximated densities, despite being non-Gaussian, do not closely resemble the empirical densities. This confirms the well-known property of EnKF that, as the number of particles approaches infinity, EnKF does not converge to the conditional density \cite{bishop2023}. These qualitative observations hold true for the remaining posterior approximations; for example, see Figure \ref{fig:density_comparison_med_time_index_4} for the comparison at $k=4$.

\begin{figure}
  \centering
  \includegraphics[width=\textwidth]{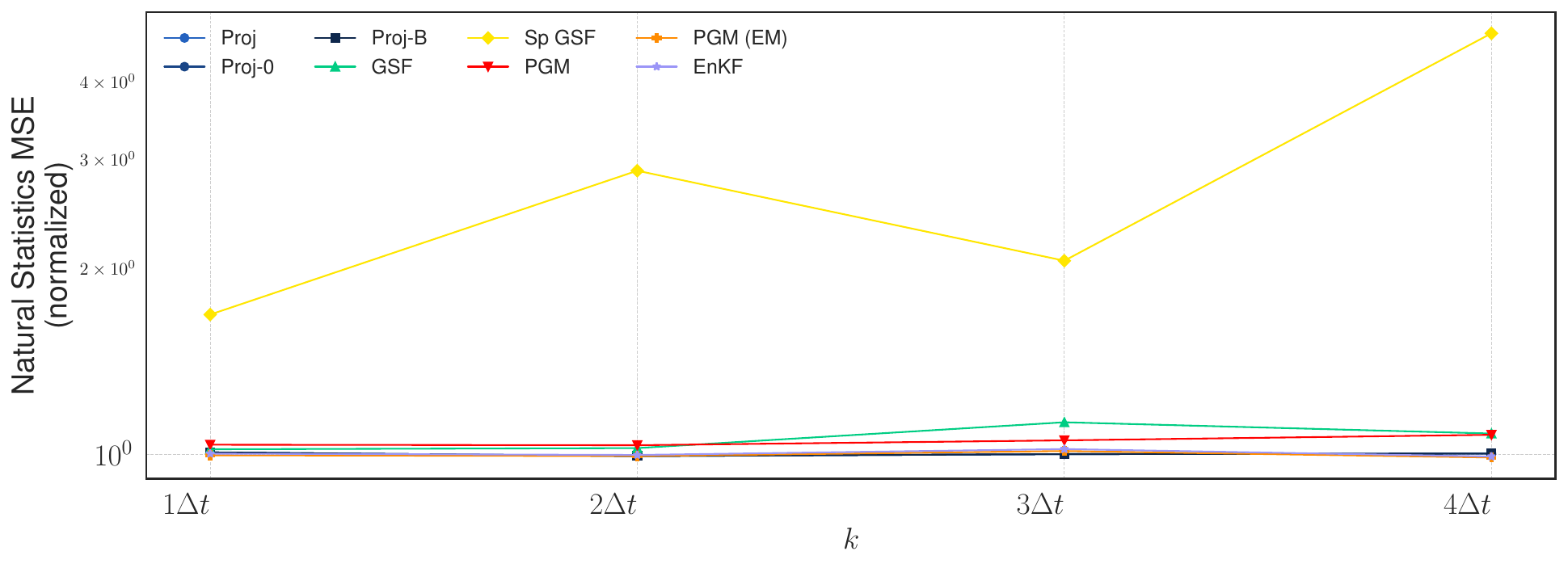}
  \caption{Comparison of $\text{nMSE}$ for different methods. The values are normalized against the $\text{nMSE}$ of the particle filter.}
  \label{fig:natural_statistics_mse}
\end{figure}
\begin{figure}
  \centering
  \includegraphics[width=\textwidth]{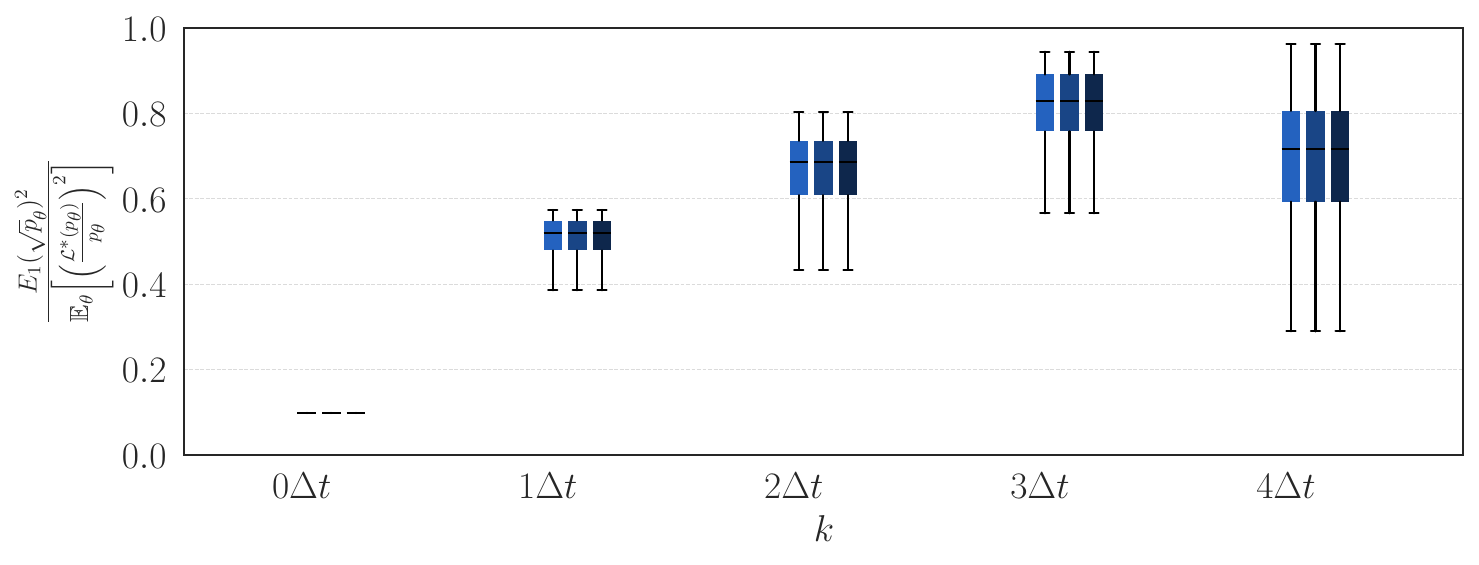}
  \caption{Quartile plots of the ratio $E_1(\sqrt{p_\theta})^2/\expvalb{\theta}{\left(\frac{\mathcal{L}^\ast (p_\theta)}{p_\theta}\right)^2}$.}
  \label{fig:local_projection_error}
\end{figure}
\begin{figure}
  \centering
  \includegraphics[width=\textwidth]{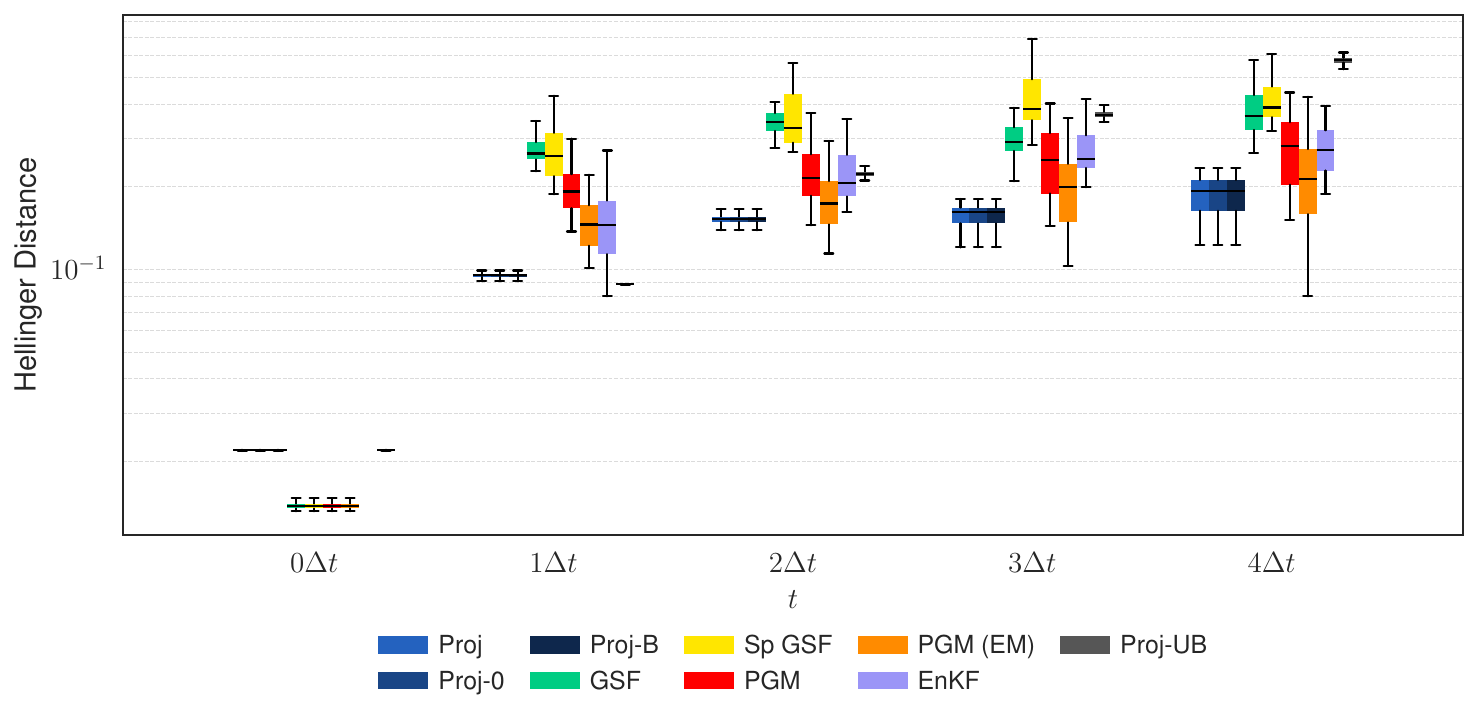}
  \caption{Quartile plots of the Hellinger distances from the approximated posterior densities to the empirical posterior density for different methods.}
  \label{fig:Hell_dist_to_particle_vdp}
\end{figure}
\begin{figure}
  \centering
  \includegraphics[width=\textwidth]{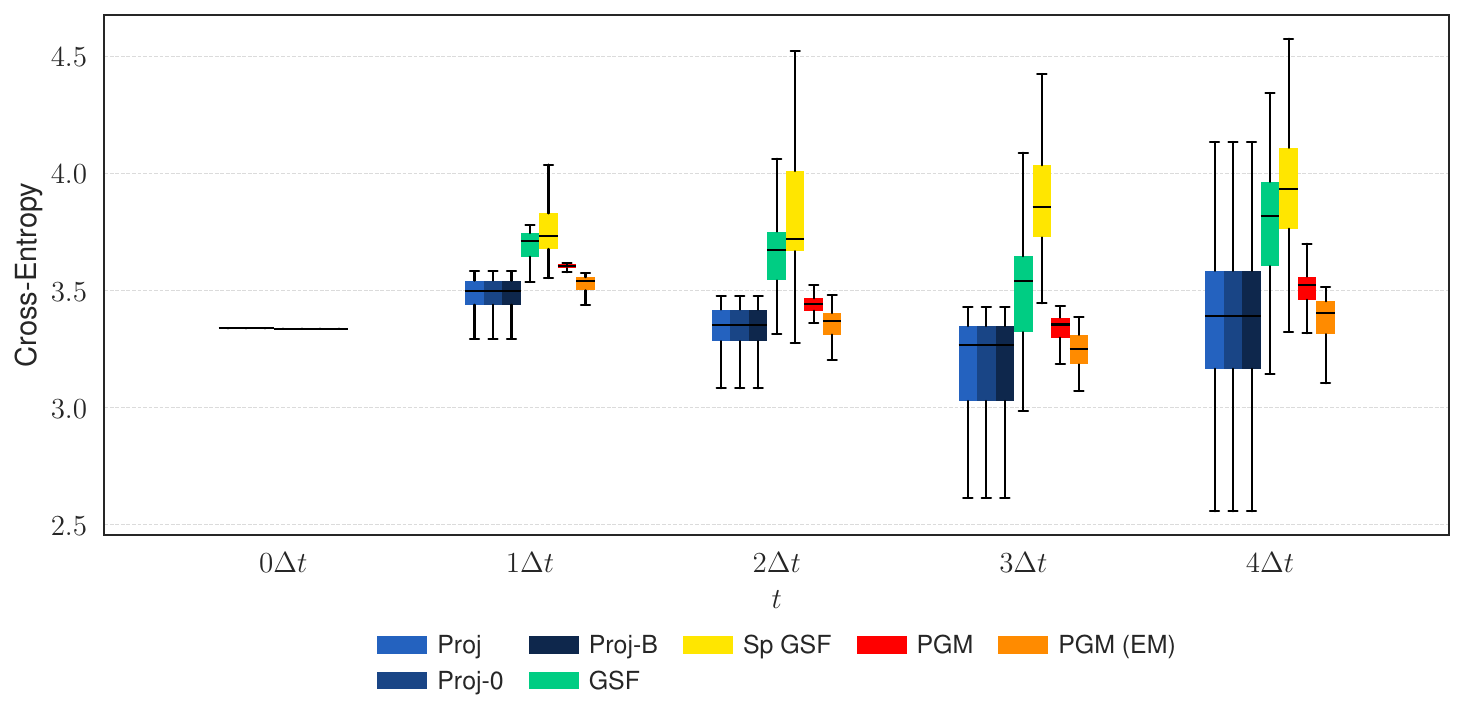}
  \caption{Quartile plots of the cross entropies from the approximated posterior densities to the empirical posterior density for different methods.}
  \label{fig:cross_entropy_vdp}
\end{figure}

In Figure \ref{fig:local_projection_error}, we observe a steady increase in the ratio of the local projection error squared, $E_1(p_\theta)^2/\expvalb{\theta}{\frac{\mathcal{L}^\ast (p_\theta)}{p_\theta}}$, over the simulation time. Using $E_1$ values, we estimate the Hellinger distances between projection filter densities and the empirical densities via a local Hellinger rate approximation $\frac{\partial H}{\partial t} (p_{t+dt}, p_{\theta_{t+dt}}) \approx \frac{1}{2\sqrt{2}} E_1(\sqrt{p_{\theta_t}})$. We compute this estimate from Proj-B (as they are roughly the same for Proj, and Proj-0) in Figure \ref{fig:Hell_dist_to_particle_vdp}, and is indicated by (Proj-UB).
In this figure, we see that although the projection filters start with higher Hellinger distances compared to EnKF and the Gaussian-mixture-based methods, from $k=1$ onward its Hellinger distances are the lowest among the compared methods. This indicates that the tangent space $T_{\sqrt{p_\theta}}\text{EM}(c)^{\frac{1}{2}}$ captures a significant portion of the conditional densities' dynamics. The same trend is observed for the cross entropy and $\text{nMSE}$ metrics, as shown in Figures \ref{fig:cross_entropy_vdp} and \ref{fig:natural_statistics_mse}, respectively. For the cross entropies, the projection filters and PGM(EM) method perform equally well. Notice that nMSE of the projection filter is so close to that of the particle filter, which indicates that the expected values of the natural statistics are well approximated by the projection filter. In general, these results demonstrate that the continuous-discrete projection filter outperforms EnKF and the Gaussian-mixture-based approximation methods despite having fewer parameters than the Gaussian-mixture-based methods (17 compared to 150) and using fewer quadrature points compared to EnKF samples.

For this level of performance, the proposed projection filter implementation is also computationally efficient. As shown in Table \ref{tab:result-vdp}, the proposed projection filters are the (third, fourth, and fifth)-fastest algorithms, after GSF and SP-GSF. Slight differences in FLOP counts among the projection filters are due to the regularization techniques employed in Algorithm \ref{alg:riem_grad}, which affect the number of ODE steps taken by the adaptive ODE solver.

\begin{table}
  \centering
  \small
  \begin{tabular}{|l|c|c|}
  \hline
  \textbf{Method} & \textbf{Execution Times (s) ($\downarrow$)} & \textbf{FLOPS ($\downarrow$)} \\ \hline
  Projection & $1.016 \times 10^{1} \pm 1.285 \times 10^{0}$ & $3.178 \times 10^{10} \pm 4.130 \times 10^{9}$ \\ \hline
  GSF & $3.149 \times 10^{-3} \pm 1.139 \times 10^{-4}$ & $6.222 \times 10^{5} \pm 1.021 \times 10^{4}$ \\ \hline
  Sp GSF & $5.123 \times 10^{-2} \pm 9.612 \times 10^{-3}$ & $1.452 \times 10^{8} \pm 2.671 \times 10^{7}$ \\ \hline
  PGM (K-mean) & $1.116 \times 10^{2} \pm 7.706 \times 10^{-1}$ & $9.633 \times 10^{10} \pm 0.000 \times 10^{0}$ \\ \hline
  PGM (EM) & $9.133 \times 10^{1} \pm 2.225 \times 10^{-1}$ & $1.578 \times 10^{11} \pm 0.000 \times 10^{0}$ \\ \hline
  EnKF & $4.608 \times 10^{1} \pm 7.065 \times 10^{-1}$ & $7.469 \times 10^{10} \pm 0.000 \times 10^{0}$ \\ \hline
  \end{tabular}
  \caption{Execution times and FLOPS for different methods.}
  \label{tab:result-fhn}
\end{table}

\subsubsection{Four Dimensional System}
\begin{figure}
  \centering
  \includegraphics[width=1\linewidth]{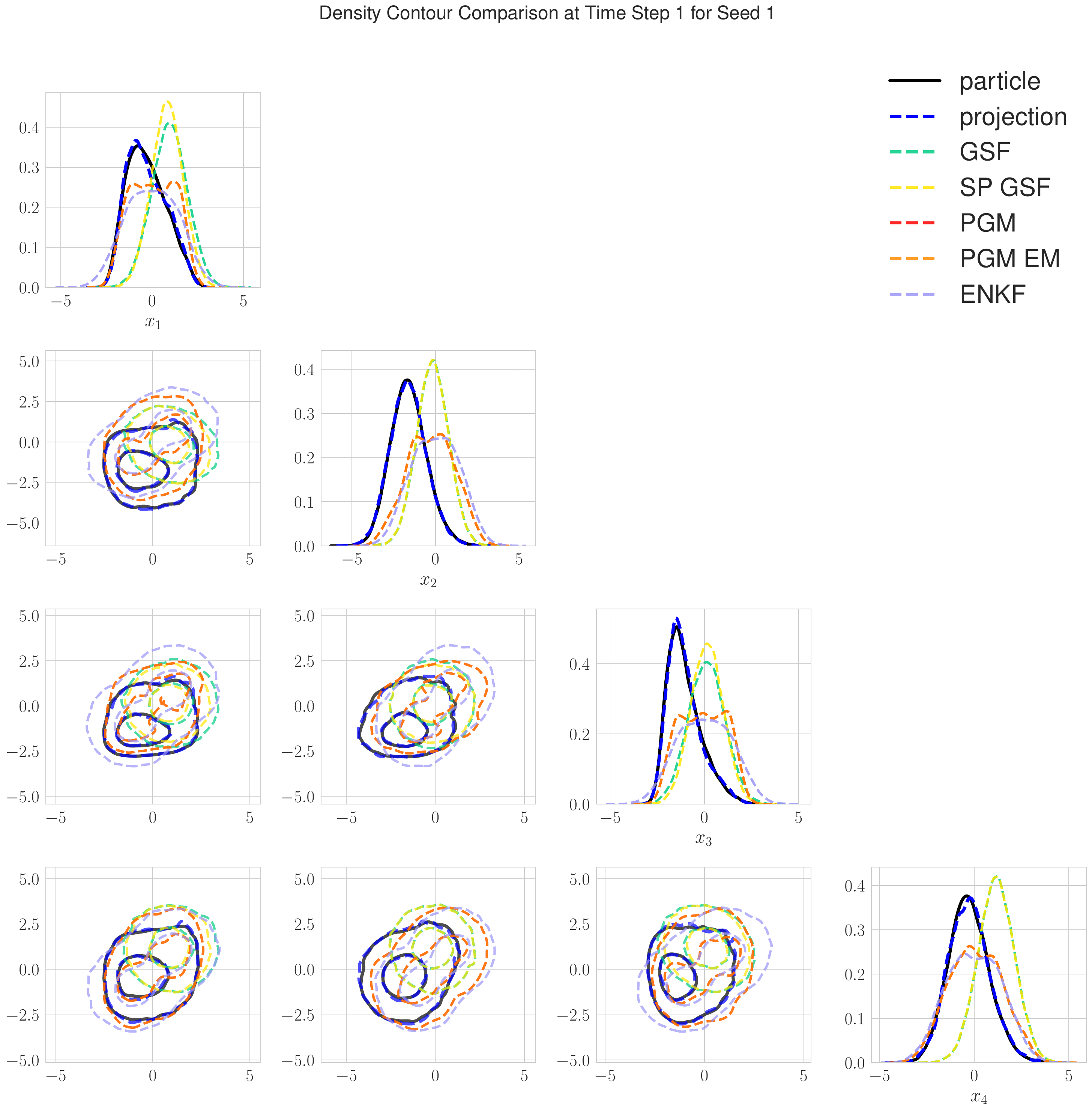}
  \caption{Comparison of the projection density at $t=\Delta t$, the empirical density, and the density of GSF, SP-GSF, PGM, PGM (EM), and ENKF (color coded). Notice that the projection density and the empirical density are indistinguishable in this time instance, while the competing methods struggle to approximate the posterior density.}
  \label{fig:Posterior_KDE_Comparison_at_t1_seed_1}
\end{figure}

\begin{figure}
  \centering
  \includegraphics[width=1\linewidth]{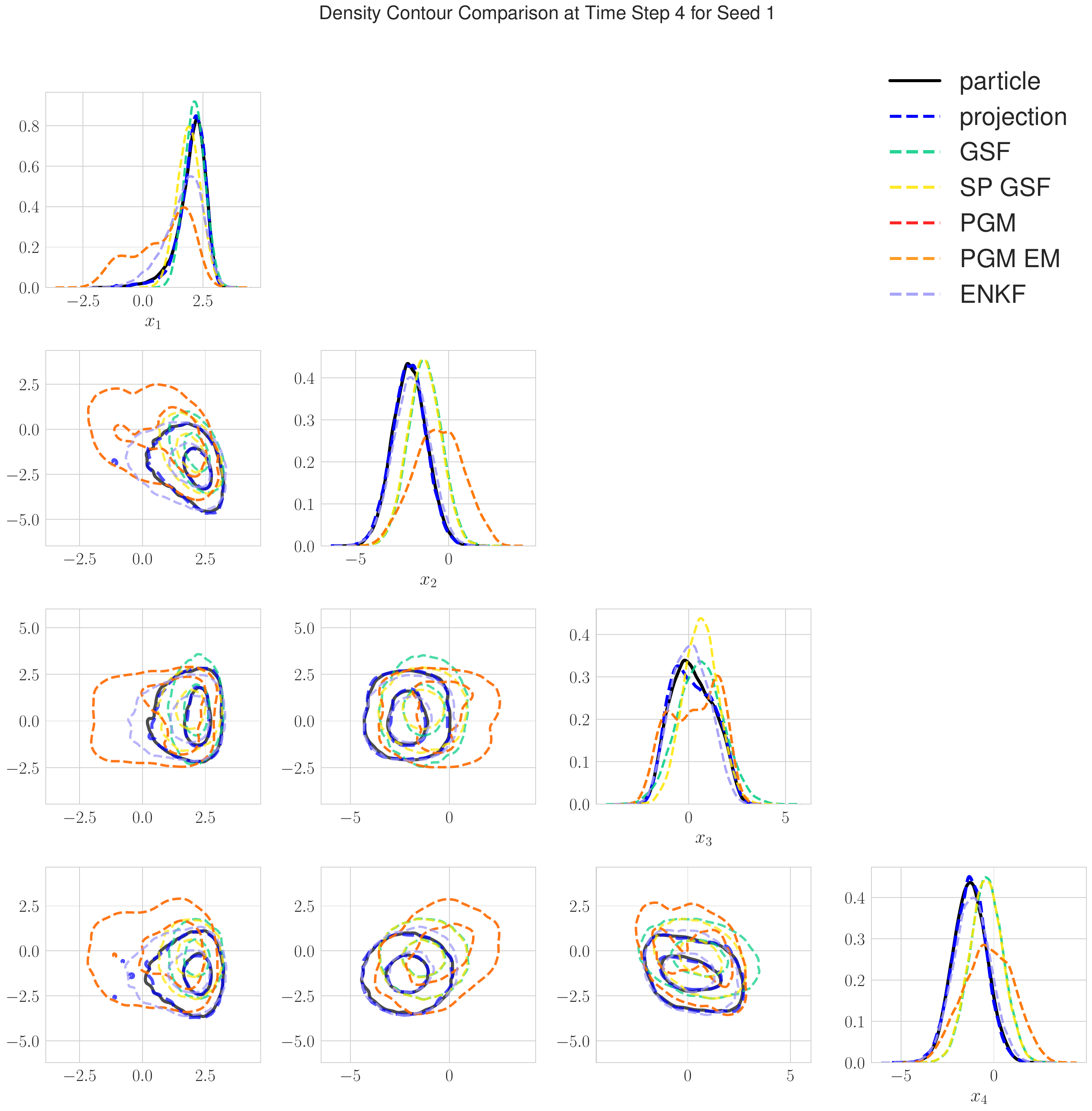}
  \caption{Similar to Figure \ref{fig:Posterior_KDE_Comparison_at_t1_seed_1}, for $t=4 \Delta t$.}
  \label{fig:Posterior_KDE_Comparison_at_t4_seed_1}
\end{figure}

For the FitzHugh--Nagumo (FhN) filtering problem, 98 \% of the simulations were completed. In Figure \ref{fig:sw1_dist_comparison_fhn} we see that although the projection filter starts with a slightly higher $\mathrm{SW}_1$ distance compared to EnKF and the Gaussian-mixture-based methods, from $k=1$ onward its $\mathrm{SW}_1$ distances are the lowest among the compared methods. The same trend is observed for the cross entropy as shown in Figure \ref{fig:cross_entropy_fhn}. For this simulation, the EnKF performs better compared to the Gaussian mixture methods in terms of $\mathrm{SW}_1$ and cross entropy metrics. In Figure \ref{fig:Posterior_KDE_Comparison_at_t1_seed_1}, we compare marginalized plots for the empirical posterior density $p_1$, the projection filter density $p_{\theta_1}$, and the approximated Gaussian-mixture densities obtained from EnKF, GSF, SP-GSF, and PGM (K-means) for the first seed. Similar to the density comparison in the previous section, the projection filter density resembles the empirical posterior density better compared to EnKF and Gaussian-mixture densities. This time, all densities appear to have only single mode. However, the empirical and the projection densities are nearly overlap, while other methods exhibit noticeable deformations. These qualitative observations hold true for the remaining posterior approximations; for example see Figure \ref{fig:Posterior_KDE_Comparison_at_t4_seed_1} for the comparison at $k=4$. Lastly, for the $\mathrm{nMSE}$, the projection filter performs reasonably well, maintaining roughly $1.07$ times the particle filter's $\mathrm{nMSE}$, although at $k=1, 2$, and $3$, PGM, PGM(EM), and EnKF have lower $\mathrm{nMSE}$.

In general, these results demonstrate that the continuous-discrete projection filter outperforms EnKF and the Gaussian-mixture-based approximation methods for the FhN filtering problem despite having fewer parameters than the Gaussian-mixture-based methods (70 compared to 700) and using fewer quadrature points compared to EnKF samples. For the FhN filtering problem, the proposed projection filter implementation is also fairly efficient. As shown in Table \ref{tab:result-fhn}, the proposed projection filter is the third-fastest algorithm.

\begin{figure}
  \centering
  \includegraphics[width=\linewidth]{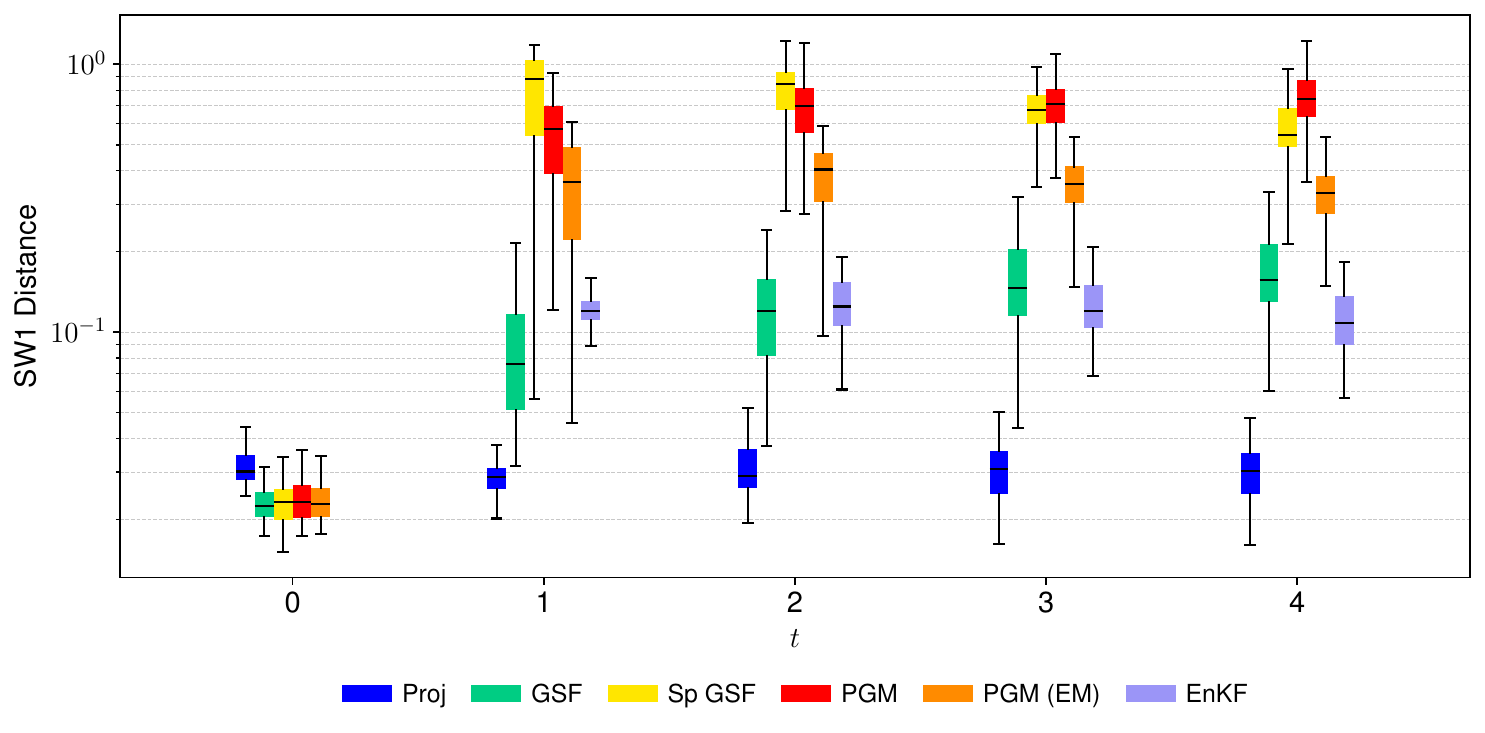}
  \caption{Quartile plots of the $\mathrm{SW}_1$ distances from the approximated posterior densities to the empirical posterior density for different methods. Although initially projection filter has a higher $\mathrm{SW}_1$ distance compared to the competing methods, the projection filter maintains low growth of $\mathrm{SW}_1$ distance over time samples, in contrast to the rest of the methods.}
  \label{fig:sw1_dist_comparison_fhn}
\end{figure}
\begin{figure}
  \centering
  \includegraphics[width=\linewidth]{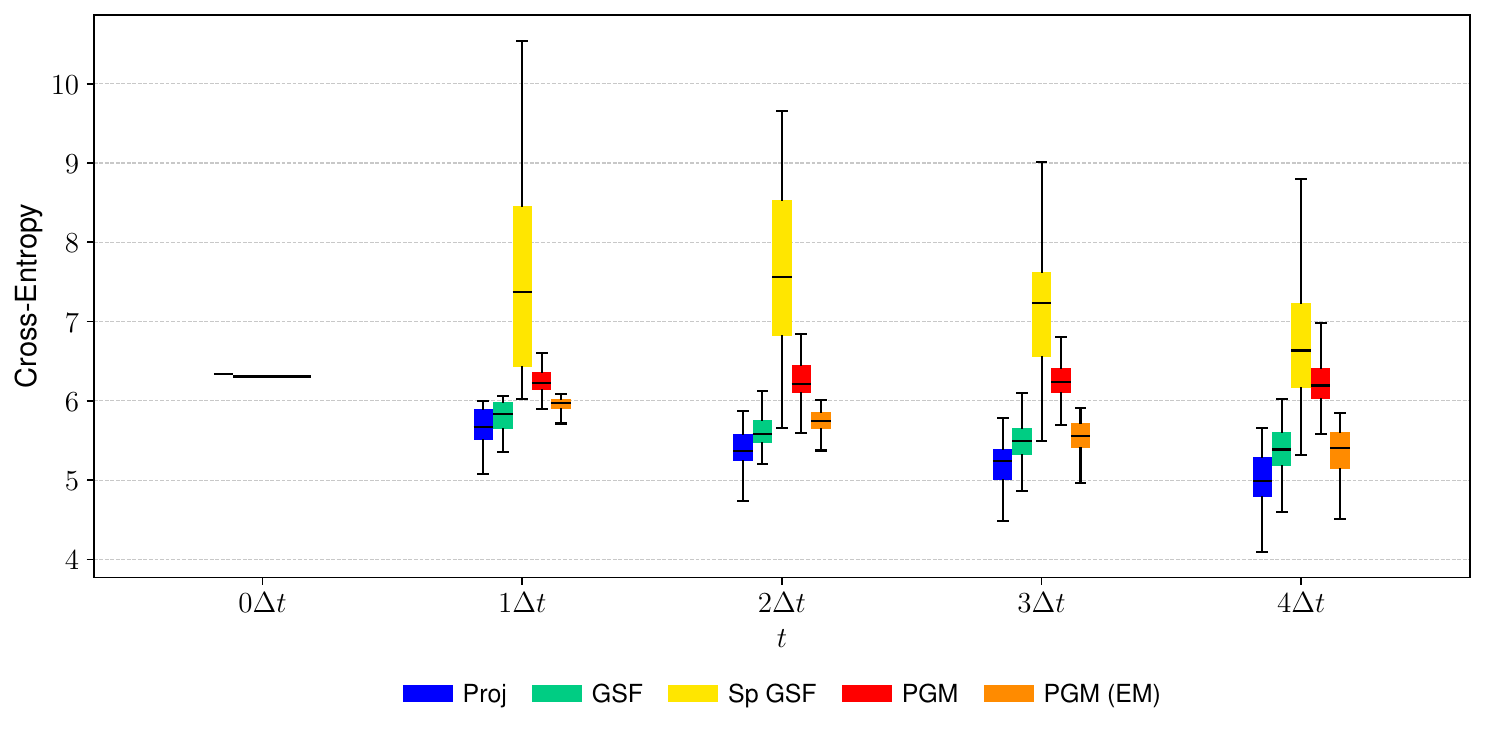}
  \caption{Cross entropy comparison between methods for FhN simulation, similar to Figure \ref{fig:cross_entropy_vdp}.}
  \label{fig:cross_entropy_fhn}
\end{figure}

\begin{figure}
  \centering
  \includegraphics[width=\linewidth]{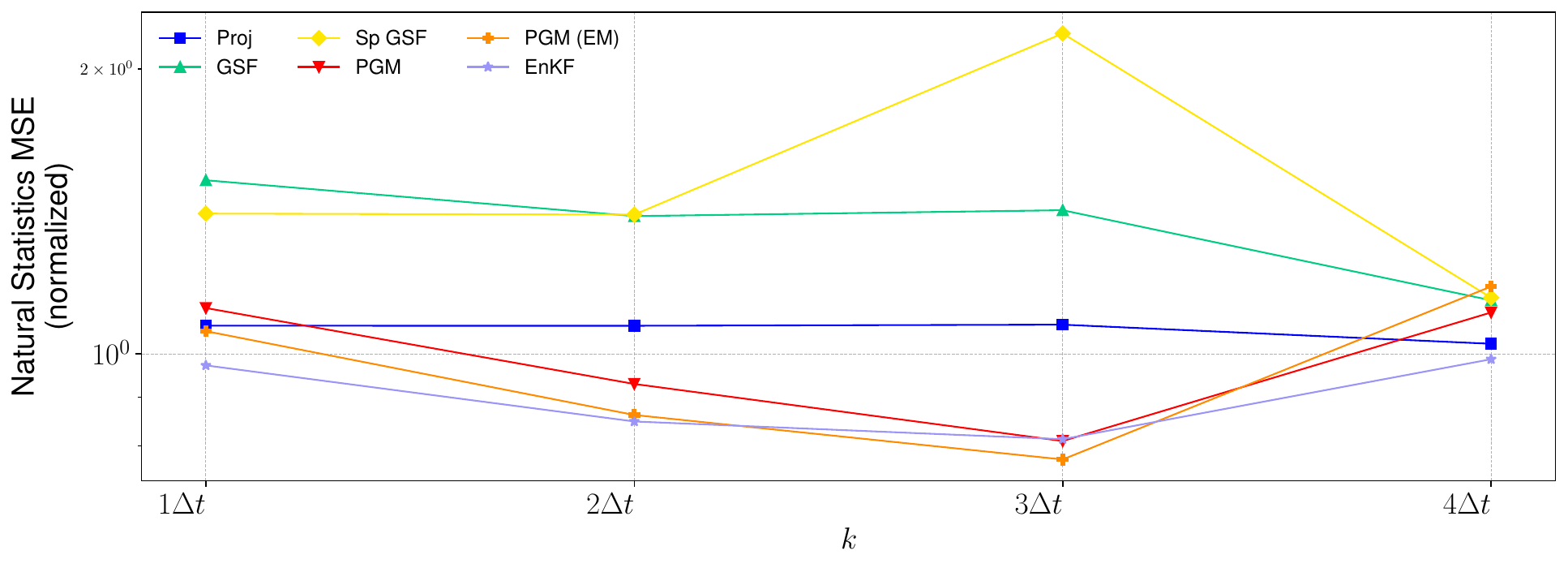}
  \caption{Comparison of nMSE for different methods for FhN simulation, similar to Figure \ref{fig:natural_statistics_mse}.}
  \label{fig:natural_statistics_mse_fhn}
\end{figure}

\section{Conclusions}\label{sec:Conclusions}

  We have proposed a numerical implementation of the continuous-discrete projection filter for exponential-family manifolds where the densities within the manifolds are conjugate to the measurement likelihood. Specifically, by leveraging a combination of sparse-grid quadrature and adaptive bijection, the filter is derived for a class of continuous stochastic systems with discrete measurements subject to additive Gaussian noise. Due to the conjugate property, the filter benefits from an exact Bayesian update. We have also proposed a simple Riemannian gradient regularization to control the stiffness of the ODEs related to the prediction step of the natural parameters. We have shown via a set of Monte Carlo simulations that the proposed filter significantly outperforms EnKF and Gaussian-sum filtering methods across several key metrics, while also being computationally less demanding than the competing methods.

% \section*{Acknowledgment}
% M.F.E. expresses gratitude to the Smart Mobility and Logistics Interdisciplinary Research Center (IRC) at KFUPM for awarding grant no. INML2407.

\bibliographystyle{unsrt}
\bibliography{Zotero_BibTeX}

\end{document}